\newtheorem{theorem}{Theorem}
\newtheorem{prop}{Proposition}
\newtheorem{lemma}{Lemma}
\theoremstyle{definition}
\newtheorem{definition}{Definition}
\newtheorem{claim}{Claim}
\newtheorem{conjecture}{Conjecture}
\newtheorem{question}{Question}
\newtheorem{case}{Case}
\newtheorem{lemmaA}{Lemma}
\newtheorem{claimA}[lemmaA]{Claim}
\title{\Large \bf The absence of monochromatic triangle implies various properly colored spanning trees}
\begin{document}
\date{}
\author[1,2]{\small Ruonan Li\thanks{Supported by National Natural Science Foundation of China (No. 11901459), China Scholarship Council (No. 202306290113) and the Institute for Basic Science (IBS-R029-C4). E-mail: rnli@nwpu.edu.cn}}
\author[1,2]{\small Ruhui Lu\thanks{E-mail:~lurh@mail.nwpu.edu.cn}}
\author[1,2]{\small Xueli Su\thanks{E-mail:~suxueli@nwpu.edu.cn}}
\author[1,2]{\small Shenggui Zhang\thanks{Supported by National Natural Science Foundation of China (No. 12071370, No. 12131013) and Shaanxi Fundamental Science Research Project for Mathematics and Physics (No.22JSZ009). E-mail:~sgzhang@nwpu.edu.cn}}
\affil[1]{ School of Mathematics and Statistics,
Northwestern Polytechnical University,
Xi'an, Shaanxi, 710129, P.R.~China}
\affil[2]{Xi'an-Budapest Joint Research Center for Combinatorics\\
Northwestern Polytechnical University, Xi'an, Shaanxi,710129, P.R.~China}
\maketitle

\begin{abstract}
An edge-colored graph $G$ is called properly colored if every two adjacent edges are assigned different colors. A monochromatic triangle is a cycle of length 3 with all the edges having the same color. Given a tree $T_0$, let $\mathcal{T}(n,T_0)$ be the collection of $n$-vertex trees that are subdivisions of $T_0$. It is conjectured that for each fixed tree $T_0$, there is a function $f(T_0)$ such that for each integer $n\geq f(T_0)$ and each $T\in \mathcal{T}(n,T_0)$, every edge-colored complete graph $K_n$ without containing monochromatic triangle must contain a properly colored copy of $T$. We confirm the conjecture in the case that $T_0$ is a star. A weaker version of the above conjecture is also obtained. Moreover, to get a nice quantitative estimation of $f(T_0)$ when $T_0$ is a star requires determining the constraint Ramsey number of a monochromatic triangle and a rainbow star, which is of independent interest.

\end{abstract}

\section{Introduction}
Let $G$ be an edge-colored graph. We say $G$ is {\it monochromatic} if all the edges are of the same color, and {\it rainbow} if all the edges are of distinct colors, and {\it lexical} if there is a total order of $V(G)$ such that edges $uv$ and $xy$ have a same color if and only if $\min\{u,v\}=\min\{x,y\}$. The {\it Canonical Ramsey Theorem} founded by Erd\H{o}s and Rado \cite{erdos_combinatorial_1950} states that given an integer $k$, each edge-colored complete graph of sufficiently large order always contains a $k$-clique $H$, which is either monochromatic or rainbow or lexical. In fact, the absence of certain rainbow (monochromatic) subgraph often forces a giant connected subgraph that is monochromatic (colors fully mixed). For instance, Erd\H{o}s and Rado \cite{erdos_combinatorial_1950} observed that every $2$-colored complete graph always contains a monochromatic spanning tree. Gallai partition theorem \cite{gallai_transitiv_1967}(c.f.\cite{gyarfas_edge_2004}) implies that the absence of rainbow triangle in edge-colored complete graphs forces a spanning 2-colored subgraph.
Studying a conjecture proposed by Andersen \cite{andersen_hamilton_1989}, Alon, Pokrovskiy and Sudakov \cite{alon_random_2017} proved that each edge-colored $K_n$ without monochromatic path of length $2$ contains a rainbow path of length $n-o(n)$. Asymptotically solving a conjecture given by Bollob\'{a}s and Erd\H{o}s\cite{bollobas_alternating_1976}, Lo's result \cite{lo_properly_2016} tells that each edge-colored $K_n$ without monochromatic star of size $(1/2-\epsilon)n$ always contains a properly colored Hamilton cycle when $n$ is sufficiently large. An edge-colored graph is called {\it properly colored} (say ``{\it PC }'' for short) if every two adjacent edges are of distinct colors.
In this paper, we continue the exploration of spanning properly colored subgraphs in edge-colored complete graphs when a fixed monochromatic configuration is forbidden.

An easy observation given by Barr \cite{barr_properly_1998} tells that each edge-colored $K_n$ without containing a monochromatic triangle (say ``mono-$C_3$-free'' for short) must contain a PC Hamilton path. Under the same condition, the first author \cite{li_properly_2021} studied the existence of PC Hamilton cycles and obtained a full characterization of the counterexamples. Particularly, when $n\geq 6$, every counterexample acts locally like a non-strongly-connected directed graph. In fact, for any PC spanning target graphs containing cycles, an edge-colored complete graph transformed from a transitive tournament is always a counterexample. Therefore it is natural to study the existence of properly colored spanning trees in mono-$C_3$-free complete graphs. For more relations between edge-colored graphs and directed graphs, we refer the readers to \cite{fujita_decomposing_2019, li_classification_2020, li_vertexdisjoint_2020, aharoni_rainbow_2019, ding_properly_2022}.

Let $T_0$ be a fixed tree. We use $\mathcal{T}(n,T_0)$ to denote the collection of $n$-vertex trees that are subdivisions of $T_0$. Note that a Hamilton path can be regarded as a subdivision of $K_2$. We show the existence of a properly colored spanning tree which is a subdivision of a given tree.

\begin{theorem}\label{Thm:subdivision of tree}
  Let $T_0$ be a tree of $k$ edges and let $G$ be a mono-$C_3$-free edge-colored $K_n$ with $n\geq (k+2)!$. Then there exists a tree $T\in \mathcal{T}(n,T_0)$ such that $G$ contains a PC copy of $T$.
\end{theorem}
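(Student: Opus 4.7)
I plan to argue by induction on $k$, the number of edges of $T_0$. The base case $k=1$ is immediate from Barr's observation quoted in the introduction: any subdivision of $K_2$ on $n$ vertices is an $n$-vertex path, and every mono-$C_3$-free $K_n$ contains a PC Hamilton path.

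For the inductive step with $k\ge 2$, pick a leaf $v$ of $T_0$ with unique neighbor $u$ and let $T_0'=T_0-v$, a tree with $k-1$ edges. Given $G$ a mono-$C_3$-free $K_n$ with $n\ge(k+2)!$, split $V(G)=A\sqcup B$ with $|A|=(k+1)!$, so that $|B|=n-(k+1)!$ is enormous (at least $(k+1)\cdot(k+1)!$). Applying the inductive hypothesis to the induced mono-$C_3$-free subgraph $G[A]$ yields a PC spanning subdivision $T'\in\mathcal{T}(|A|,T_0')$ of $G[A]$. Let $x\in A$ be the branch vertex of $T'$ corresponding to $u$, and let $F$ be the set of colors of the $T'$-edges at $x$; since $T'$ is properly colored, $|F|\le\deg_{T_0'}(u)\le k-1$.

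The task now reduces to finding a PC Hamilton path $P$ in the mono-$C_3$-free $K_{|B|+1}$ induced on $B\cup\{x\}$ that starts at $x$ and whose first edge has color outside $F$. Once such a $P$ is found, $T:=T'\cup P$ is a PC subdivision of $T_0$ spanning $V(G)$, with the far endpoint of $P$ realizing the leaf $v$. I therefore reduce the inductive step to an \emph{extension lemma}: in any mono-$C_3$-free $K_m$ with $m$ sufficiently large in terms of $|F|$, for any vertex $x$ and any set $F$ of forbidden colors, there is a PC Hamilton path starting at $x$ whose first edge avoids $F$.

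The main obstacle is proving this extension lemma, particularly when $x$ has few distinct colors on its incident edges and some of them lie in $F$. In the easy case, when $x$ sees at least $|F|+1$ distinct colors, one picks a starting edge $xy$ of non-forbidden color and adapts Barr's rotation-extension argument inside $K_m-x$ to produce a PC Hamilton path ending at $y$ that is compatible with prepending $xy$. In the hard case I would exploit two structural features of mono-$C_3$-free colorings: at most one vertex can have a monochromatic neighborhood (else any third vertex completes a monochromatic triangle), and every color class is triangle-free, hence Tur\'an-sparse. These allow one to transfer a bounded number of vertices between $A$ and $B$ to swap $x$ with a nearby vertex $x'\in A$ having richer color diversity, and reroute $T'$ so that $x'$ plays the role of $u$. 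The extra factor of $k+2$ between $(k+1)!$ and $(k+2)!$ is what supplies enough slack to accommodate this color-repair while still leaving $B$ large enough to host the extension path.
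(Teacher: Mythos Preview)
Your extension lemma is false even for arbitrarily large $m$. Take $V=\{x,y,z_1,\dots,z_{m-2}\}$ with $col(xy)=2$, $col(xz_i)=1$, $col(yz_i)=2$ for all $i$, and the edges $z_iz_j$ properly coloured using colours $\ge 3$. This $K_m$ is mono-$C_3$-free, yet every PC Hamilton path from $x$ must begin with an edge of colour~$1$: the only non-colour-$1$ edge at $x$ is $xy$, but every continuation $yz_i$ has colour $2=col(xy)$. So with $F=\{1\}$ the lemma fails, and your ``easy case'' criterion $d^c(x)\ge |F|+1$ is satisfied here. The same construction with several $y$-type vertices defeats any bound of the form $d^c(x)\ge |F|+c$. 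Your repair --- swapping $x$ for a nearby $x'$ and ``rerouting $T'$ so that $x'$ plays the role of $u$'' --- is where the real difficulty lies: the inductive hypothesis gives you no control over which vertex of $A$ becomes the branch vertex for $u$, and if $\deg_{T_0'}(u)\ge 2$ then relocating that branch vertex requires rebuilding several legs of $T'$ simultaneously with compatible colours, which is essentially the original problem again.

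The paper avoids this trap by not doing induction on $k$ at all. It first uses the bound $n\ge(k+2)!$ only to guarantee a PC copy of the \emph{small} tree $T_0$ itself (Lemma~\ref{lem:all PC tree}), and then takes a maximal PC subdivision $T_1$ of $T_0$ in $G$ and shows that any leftover vertex $u$ can be inserted into some edge of $T_1$. The insertion argument is local: one looks at triples $(u,x,y)$ with $xy\in E(T_1)$ and $col(ux)=col(xy)$, chooses one minimising the size of the component of $T_1-xy$ on the $y$-side, and uses the mono-$C_3$-free condition to either insert $u$ or produce a compatible triple with strictly smaller parameter. This sidesteps entirely the need to control the first colour of a long PC path from a prescribed vertex.
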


In the above theorem, there is no control on the distribution of subdividing vertices on different edges of $T_0$. We wonder the existence of all possible subdivisions and therefore propose the following conjecture.
\begin{conjecture}\label{conj:spanning tree}
Let $T_0$ be a fixed tree. Then there is a function $f(T_0)$ such that every mono-$C_3$-free edge-colored $K_n$ with $n\geq f(T_0)$ contains a PC copy of $T$ for each tree $T\in \mathcal{T}(n,T_0)$.
\end{conjecture}
We confirm the above conjecture when $T_0$ is a star.
For $k\geq 3$, a subdivision of a $k$-star is called a {\it $k$-spider}. A {\it leg} in a $k$-spider $T$ is a path $P$ from the unique $k$-degree vertex to a leaf of $T$. If $P$ contains $\ell$ edges, then we say the leg $P$ is of length $\ell$. Use $C_3$ and $S_k$ to denote a triangle and a star of $k$ edges. Let $g(S_k,C_3)$ to be the maximum number $N$ such that there exists an edge-colored $K_N$ containing neither a rainbow $S_k$ nor a monochromatic $C_3$. The existence of  $g(S_k,C_3)$ is guaranteed by the Canonical Ramsey Theorem. In literature, Gy\'{a}rf\'{a}s, Lehel, Schelp and Tuza \cite{gyarfas_ramsey_1987} studied the {\it Local Ramsey number}; Jamison, Jiang and Ling \cite{jamison_constrained_2003} defined the {\it Constraint Ramsey Number}. Both definitions are consistent with $g(S_k,C_3)$. We will use the notation $g(S_k,C_3)$ to state our main result. A weak upper bound of  $g(S_k,C_3)$ is given in Section \ref{sec:4}, which is applied in the proof of Theorem \ref{Thm:subdivision of tree}.
\begin{theorem}\label{Thm:k-spider}
     Given an integer $k\geq 3$, let $G$ be a mono-$C_3$-free edge-colored $K_n$. If $n\geq 6k\cdot g(S_k,C_3)+2k^3+2k^2+8k$,  then
    for every $k$ positive integers $\ell_1\geq \ell_2\geq \cdots\geq \ell_k$ satisfying $\sum_{i=1}^k \ell_i=n-1$, $G$ contains a properly colored spanning spider $T$ with legs of lengths $\ell_1,\ell_2,\ldots,\ell_k$, respectively.
\end{theorem}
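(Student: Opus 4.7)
The plan is to identify the degree-$k$ vertex of the spanning spider as the center of a rainbow $k$-star in $G$, and then to construct each of the $k$ legs as a properly colored path out of a prescribed block of the remaining vertices.

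Since $n\geq 6k\cdot g(S_k,C_3)+2k^3+2k^2+8k$ greatly exceeds $g(S_k,C_3)$, the definition of $g(S_k,C_3)$ immediately produces a rainbow $k$-star in $G$; fix one with center $v$, leaves $u_1,\ldots,u_k$, and pairwise distinct edge colors $c_1,\ldots,c_k$. The vertex $v$ will be the degree-$k$ vertex of the spider, and each edge $vu_{\pi(i)}$ will serve as the initial edge of the $i$-th leg for a permutation $\pi$ of $\{1,\ldots,k\}$ to be chosen later. I would then partition the remaining $n-k-1$ vertices into blocks $W_1,\ldots,W_k$ with $|W_i|=\ell_i-1$, and try to realise each leg as the concatenation of $vu_{\pi(i)}$ with a PC Hamilton path of the induced subgraph $G[W_i\cup\{u_{\pi(i)}\}]$ that starts at $u_{\pi(i)}$ and whose first edge has color different from $c_{\pi(i)}$. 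Gluing these legs at $v$ produces the required spanning spider.

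Existence of a PC Hamilton path in each block is supplied by Barr's theorem applied to the (still mono-$C_3$-free) induced subgraph. Legs of length $1$ or $2$ are essentially free: $\ell_i=1$ needs no extension, and $\ell_i=2$ requires only one $u_{\pi(i)}$-neighbour $w$ with color $\neq c_{\pi(i)}$, which exists because a mono-$C_3$-free coloring cannot let one color dominate almost all edges at $u_{\pi(i)}$ once $n$ is this large. For longer legs the delicate point is the first-edge color constraint; I would exploit the freedom (i) to permute $\pi$, (ii) to swap a single vertex between two blocks and run a rotation-extension argument so that the PC Hamilton path of one block can be made to start at the prescribed vertex with a permitted color, and (iii) if both of these fail, to reselect the rainbow $k$-star from a pool of candidates inside a reserved vertex set whose presence is guaranteed by the $6k\cdot g(S_k,C_3)$ term in the hypothesis.

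The main obstacle is carrying out step (iii) cleanly: each fresh rainbow $k$-star can cost up to $g(S_k,C_3)$ vertices to locate, and one must leave room for a bounded number of retries while still preserving the partition and length constraints of the legs already built. This is precisely why the threshold on $n$ is \emph{linear} in $g(S_k,C_3)$ rather than merely larger than it. The additive polynomial $2k^3+2k^2+8k$ is then what absorbs the $k$ star-leaves, the short-leg overhead, the swap costs between two blocks, and the worst-case analysis when the forbidden color class at some $u_{\pi(i)}$ is a large fraction of the edges incident to $u_{\pi(i)}$ inside $W_i$.
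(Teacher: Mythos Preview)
Your proposal correctly locates the real difficulty --- the first-edge color constraint at each leaf of the rainbow star --- but the three remedies you sketch do not close the gap. Lemma~\ref{Lem:STAR} (Barr) gives a PC Hamilton path from any prescribed vertex, but with \emph{no} control over the color of its first edge, and nothing you wrote upgrades it. Permuting $\pi$ does not help, since the bad situation ``every PC Hamilton path of $G[W_i\cup\{u_j\}]$ from $u_j$ begins in color $c_j$'' is a property of $u_j$ and $c_j$, not of the index $i$. Your rotation-extension (ii) is an assertion, not an argument: there is no lemma here producing a PC Hamilton path from $u$ whose first edge avoids a prescribed color. And reselecting the rainbow star (iii) only pushes the problem to a new set of leaves; you give no reason the obstruction cannot recur at every candidate star, so the linear budget $6k\cdot g(S_k,C_3)$ is not actually being spent on anything that terminates.

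The paper solves the first-edge problem by a different mechanism. Rather than asking for a PC path with a forbidden first color, it replaces each leg-stub by a \emph{nice shovel}: a PC path with a triangle glued to its near end whose far side is a center edge. The triangle offers two entry colors at the junction, and Lemma~\ref{Lem:S AND Y} shows how to splice such a shovel onto another triangle-with-center-edge into a single PC path with the right endpoint and starting color. To make this work the rainbow $k$-star is not chosen arbitrarily: its leaves are taken to be centers of vertex-disjoint \emph{nice bowties}, so that each $u_i$ already carries the triangle needed for splicing. This is where the factor $g(S_k,C_3)$ enters --- one needs more than $g(S_k,C_3)$ disjoint bowties to force a rainbow star among their centers. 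If there are too few disjoint bowties, the paper shows (Claims~\ref{Cl:color-once} and \ref{Cl:degenerate}) that after deleting them the remaining graph is ``degenerate'': every edge takes one of its endpoints' dominant colors, so the graph encodes a mono-$C_3$-free multipartite tournament, and a separate embedding result (Theorem~\ref{Thm:D-spider}) produces the required oriented tree there. Your plan contains neither the shovel/bowtie machinery nor the tournament reduction, and without one of them the first-edge obstruction is genuinely unresolved.
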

\noindent \textbf{Sketch of the proofs}. To prove Theorem \ref{Thm:subdivision of tree}, we first show the existence of every properly colored tree on $k$ edges (this is guaranteed by the Canonical Ramsey Theorem for sufficiently large $n$. Our proof gives an explicit bound for $n$), and then embed the remaining vertices greedily. However, the greedy method can not guarantee the number of subdividing vertices on each edge precisely as whatever we want. Therefore the key point in the proof of Theorem \ref{Thm:k-spider} is using some extensible structures as glue to merge vertices in mono-$C_3$-free complete graphs into legs. If the host graph has many ``nice bowties'', then we are home. Otherwise, by removing constant number of vertices, the host graph is essentially a multipartite tournament with certain properties inherent from the ``mono-$C_3$-free'' condition. Then by analyzing the structure of this multipartite tournament, we obtain a certain oriented tree which is almost the spanning tree we desired, except for the first leg. Applying some structural lemmas proved in  Section \ref{sec:2}, we finally embed the remaining vertices.

In Section \ref{sec:2}, we define some crucial structures and obtain related properties. In Section \ref{sec:3}, we study the ``mono-$C_3$-free tournament'' and show the existence of a certain oriented tree which is almost spanning. The proofs of Theorems \ref{Thm:subdivision of tree} and \ref{Thm:k-spider} are deliverd in Section \ref{sec:4}.
\section{Preliminaries}\label{sec:2}
\subsection{Related notions}

Let $G$ be an undirected graph. For each $U\subseteq V(G)$, we write $G-U$ for
$G[V(G) \setminus U]$.  If  $U=\{v\}$ is a singleton,
we write $G- v$ rather than $G-\{v\}$. Instead of $G-V (G^{\prime}  )$ we simply
write $G-G^{\prime}$.  For a set of edges $F$, we define $G-F:= (V(G), E(G)\setminus F)$ and $G+F:= (V(G)\cup V(F), E(G) \cup F)$;
as above, $G-\{e\}$ and $G+\{e\}$ are abbreviated to
$ G-e$ and $G+e$. Let $P$ be a path in $G$. The length of $P$ is the number of edges on $P$.

Let $G$ be an edge-colored graph. Denote by $col(e)$ and $col(G)$, respectively, the color of an edge $e$ and the set of colors assigned to $E(G)$.
For a vertex $v\in V(G)$, the \textit{color degree} of $v$ in $G$, denoted by $d^{c}_{G}(v)$ is the number of distinct colors assigned to the edges incident to $v$.
We use $\delta^{c}(G)=\min\{d^{c}_{G}(v): v\in V(G)\}$ to denote the \emph{minimum color degree} of $G$, and $\Delta^{mon}_G(v) = \max_{c\in col(G)}|\{u\in V(G)\setminus\{v\}: col(uv) = c\}|$ to denote the \emph{maximum monochromatic degree} of $v$.
For two disjoint subsets $V_1$ and $V_2$ of $V(G)$, denote by $col(V_1, V_2)$ the set of colors appearing on the edges between $V_1$ and $V_2$ in $G$. When $V_1=\{v\}$, use $col(v, V_2)$ to denote $col(\{v\}, V_2)$.

Let $D$ be a directed graph. We use $V(D)$ and $A(D)$ to denote the vertex set and the arc set of $D$. If $uv\in A(D)$, then we say $u$ {\it dominates} $v$,  and denote it by $u \rightarrow v$. For two disjoint subsets $X$,  $Y$ of $V(D)$,  if each arc $uv$ between $X$ and $Y$ satisfies that $u\in X$ and $v\in Y$,  then we say $X \rightarrow Y $.   For a vertex $u\in V(D)$,  denote by $N^+_D(u)$ the set of vertices that are dominated by $u$,  and denote by $N^-_D(u)$ the set of vertices that are dominating $u$. The cardinality of $N^+_D(u)$ and $N^-_D(u)$ are denoted by $d^+_D(u)$ and $d^-_D(u)$, respectively. When this is no ambiguity, we often omit the subscript $D$. Let $P=v_1v_2\cdots v_{t+1}$ be a directed path in $D$ with $v_i \rightarrow v_{i+1}$ for $i=1,2,\ldots,t$. We say the length of $P$ is $t$ and the directed path $P$ is starting at $v_1$ and ending at $v_{t+1}$.

For other notations and terminologies not defined here, we refer the reader to \cite{bondy2008graph}.

\subsection{Extensible structures}
The main challenge for embedding PC spanning trees is to merge pieces of PC structures together. We find the following configurations can be used as glue for extending a PC path with certain constraints.

\begin{definition}[\textbf{Center edge of a colored triangle}]
Let $S$ be an edge-colored triangle, if
$col(e)\not\in col(S-e)$ for an edge $e$ in $S$, then we say $e$ is a \textit{center edge} of $S$.
\end{definition}
Apparently, every rainbow triangle contains $3$ center edges, every 2-colored triangle contains a unique center edge and every monochromatic triangle contains no center edge.

\begin{definition}[\textbf{Nice $t$-shovel}]
Let $Y$ be an edge-colored graph, consisting of a path $P=w_1w_2\cdots w_t~(t\geq 1)$ and a triangle $S=u_1u_2u_3u_1$ with $V(P)\cap V(S)=\{w_1\}=\{u_1\}$, as shown in the Figure \ref{fig:shovel}.  We call $Y$ a \textit{$t$-shovel}. If $P$ is a PC path, $u_2u_3$ is a center edge of $S$ and $col(w_1w_2)\not\in col\left(w_1,\{u_2,u_3\}\right)$, then we say $Y$ is a \textit{nice $t$-shovel}. For a nice $t$-shovel $Y$ with $t\geq 2$, we call $u_1=w_1$ the {\it center} of $Y$.
\end{definition}

\begin{definition}[\textbf{Nice bowtie}]
Let $F_1$ be a graph obtained by identifying one vertex of two disjoint triangles (see Figure~\ref{fig:short-B}) and let  $F_2$ be a graph obtained by adding an edge between two disjoint triangles (see Figure~\ref{fig:long-B}). Then we call both $F_1$ and $F_2$ \textit{bowtie}. To be specific, we say $F_1$ is a \textit{short bowtie} and $F_2$ is a \textit{long bowtie}. Let $F_1$ and $F_2$ be labeled as Figure~\ref{fig:short-B} and Figure~\ref{fig:long-B}. We say $u_0$ is the \textit{center} of $F_1$ and $u_3, u_4$ are centers of $F_2$. If $u_1u_2$ and $u_3u_4$ in $F_1$ are center edges of their lying triangles  and $col(u_0, \{u_1,u_2\})\cap col(u_0, \{u_3,u_4\})=\emptyset$, then we say $F_1$ is a \textit{nice short bowtie} (or nice bowtie).  If $F_2-\{v_1,v_2\}$ and $F_2-\{v_5,v_6\}$ are nice shovels, then we say $F_2$ is a \textit{nice long bowtie} (or nice bowtie).
\end{definition}

\begin{figure}[h]
    \centering
    \subfigure[{A $t$-shovel }]
    {
    \includegraphics[width=0.30\linewidth]{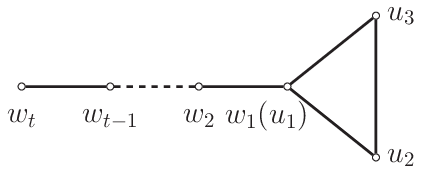}
    \label{fig:shovel}
     }
       \hskip 1.5cm
    \subfigure[{A short  bowtie $F_1$ with $u_0$ as a center}]
    {
     \includegraphics[width=0.18\linewidth]{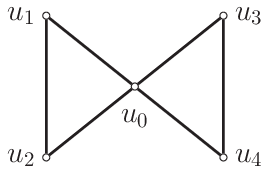}
     \label{fig:short-B}
    }
    \hskip 1.5cm
        \subfigure[{A long  bowtie $F_2$  with $v_3$ and $v_4$ as centers}]
     {
     \includegraphics[width=0.23\linewidth]{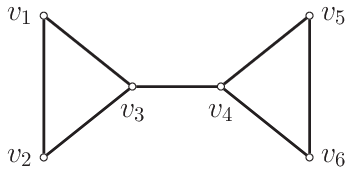}
      \label{fig:long-B}
    }
\caption{Extensible structures}
\end{figure}
The following lemmas are useful in the proofs of the main theorems.
\begin{lemma}\label{Lem:PCp}
Let $G$ be a mono-$C_3$-free edge-colored $K_n$ and let $P=v_1v_2\cdots v_t$ be a PC path in $G$ with $t\geq 2$.  If a vertex $v\in V(G)\setminus V(P)$ satisfies $col(vv_i)=col(v_iv_{i+1})$ for some $i\in [1,t-1]$, then there exists $j\in [i,t-1]$ such that $Q=v_1v_2\cdots v_jvv_{j+1}\cdots v_{t}$ is a PC path with $col(vv_j)=col(v_jv_{j+1})$.
 \end{lemma}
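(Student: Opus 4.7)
The plan is a greedy right-shift: starting from the given index $i$, attempt to insert $v$ just after $v_i$ and, whenever the resulting path fails to be properly colored, exploit the exact reason for failure as a new match-hypothesis one step to the right. Concretely, I would first examine the candidate $Q = v_1\cdots v_i v v_{i+1}\cdots v_t$. Since $P$ is PC, the only new adjacencies to scrutinise are those touching $v$: the left-adjacency $col(v_{i-1}v_i)$ versus $col(v_i v)$, the two edges at $v$ itself, and $col(v v_{i+1})$ versus $col(v_{i+1}v_{i+2})$.

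The first two checks come for free, using the mono-$C_3$-free hypothesis. The left-adjacency is fine because $col(v_i v) = col(v_i v_{i+1})$ and $P$ is PC, so $col(v_i v) \ne col(v_{i-1} v_i)$. For the pair of edges at $v$, the triangle $v v_i v_{i+1}$ would be monochromatic if $col(v v_{i+1})$ equalled $col(v v_i) = col(v_i v_{i+1})$, contradicting our hypothesis on $G$. Hence the only genuine obstruction is $col(v v_{i+1}) = col(v_{i+1}v_{i+2})$, and this can occur only when $i+1 \le t-1$. If this obstruction fails, $Q$ is PC and $j = i$ works; if it holds, it is precisely the hypothesis of the lemma applied at the shifted index $i+1$, so I iterate with $j := i+1$. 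The index strictly increases, so the procedure halts after at most $t-1-i$ rounds.

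The terminal case $j = t-1$ requires separate handling because there is no $v_{j+2}$: in $Q = v_1\cdots v_{t-1} v v_t$, the only remaining adjacency to check is $col(v_{t-1}v)$ against $col(v v_t)$. But we maintain $col(v_{t-1}v) = col(v_{t-1}v_t)$ throughout, so equality here would monochromatize triangle $v v_{t-1} v_t$, which is forbidden. Thus the iteration, whenever it terminates, delivers a valid $j$. I do not foresee any real obstacle here: the lemma is essentially careful bookkeeping of the two PC-adjacencies introduced at $v$, with the mono-$C_3$-free hypothesis supplying the needed inequality at each boundary; the slight subtlety is just ensuring the iteration terminates and confirming the endpoint case, both of which are immediate.
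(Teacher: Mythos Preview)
Your argument is correct and follows essentially the same logic as the paper's proof: the paper simply takes $j$ to be the \emph{largest} index in $[1,t-1]$ with $col(vv_j)=col(v_jv_{j+1})$ and verifies directly that the insertion works there, whereas you iterate rightward from $i$ until the right-adjacency obstruction disappears. The key observations are identical---the mono-$C_3$-free condition forces $col(vv_j)\neq col(vv_{j+1})$, the PC property of $P$ handles the left adjacency, and failure on the right reproduces the hypothesis one step over---so the two arguments differ only in packaging.
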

\begin{proof}
Let $j$ be the largest integer in $[1,t-1]$ satisfying $col(vv_j)=col(v_jv_{j+1})$. Then $j\geq i$. Since $vv_jv_{j+1}v$ is not a monochromatic $C_3$, we have $col(vv_j)\neq col(vv_{j+1})$. If $j=t-1$, then $Q=v_1v_2\cdots v_{t-1}vv_{t}$ is a desired PC path. If $j\leq t-2$, then by the maximality of $j$, we have $col(vv_{j+1})\neq col(v_{j+1}v_{j+2})$, which implies $Q=v_1v_2\cdots v_{j}vv_{j+1}\cdots v_t$ is a desired PC path.
\end{proof}
\begin{lemma}\label{Lem:STAR}
    Let $G$ be a mono-$C_3$-free edge-colored $K_n$. Then for each vertex $v\in V(G)$, there is a PC Hamilton path in $G$ starting at $v$.
\end{lemma}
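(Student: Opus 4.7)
The plan is to argue by contradiction. Let $P = v_1 v_2 \cdots v_t$ be a longest PC path in $G$ that starts at $v = v_1$, and suppose $t < n$. The case $t = 1$ is immediate: for any $u \in V(G) \setminus \{v\}$, the single edge $vu$ is itself a PC path of length $1$ starting at $v$, contradicting the maximality of $P$. So I may assume $t \geq 2$ and fix some $u \in V(G) \setminus V(P)$.

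Next I examine the color of the edge $uv_t$. If $col(uv_t) \neq col(v_{t-1}v_t)$, then $v_1 v_2 \cdots v_t u$ is a PC path starting at $v$ with $t+1$ vertices, contradicting the choice of $P$. So I may suppose $col(uv_t) = col(v_{t-1}v_t)$. In this case I cannot extend $P$ at its endpoint $v_t$, and instead I need to insert $u$ somewhere inside $P$ without displacing $v_1 = v$ as the starting vertex.

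To do this, I would apply Lemma~\ref{Lem:PCp} to the reverse of $P$. Writing $w_i = v_{t-i+1}$, the path $P^R = w_1 w_2 \cdots w_t$ is PC, and one has $col(uw_1) = col(uv_t) = col(v_{t-1}v_t) = col(w_1 w_2)$, so the hypothesis of Lemma~\ref{Lem:PCp} is satisfied for $P^R$ with $i = 1$. The lemma then produces some $j \in [1, t-1]$ such that $Q = w_1 \cdots w_j u w_{j+1} \cdots w_t$ is a PC path. Reversing $Q$ yields the PC path $v_1 \cdots v_{t-j} u v_{t-j+1} \cdots v_t$, which has $t+1$ vertices and (because $t - j \geq 1$) still starts at $v_1 = v$, contradicting the maximality of $P$.

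The only structural subtlety is the \emph{direction} in which Lemma~\ref{Lem:PCp} is invoked. The relevant hypothesis produced by Case~2, namely $col(uv_t) = col(v_{t-1}v_t)$, is at the ``wrong end'' for the lemma as stated, so I reverse $P$ first; the lemma's guarantee $j \geq 1$ in $P^R$ then translates to an insertion position $t - j \leq t - 1$ in $P$, which is exactly what is needed to preserve $v$ as the first vertex. This is the only step where the ``starting at $v$'' constraint really bites; everything else is a direct appeal to Lemma~\ref{Lem:PCp} together with the mono-$C_3$-free hypothesis (the latter enters only through the proof of Lemma~\ref{Lem:PCp}).
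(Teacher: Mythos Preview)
Your proof is correct and follows essentially the same approach as the paper's. The only cosmetic difference is that the paper fixes $v$ as the \emph{last} vertex of a longest PC path (writing $v_t = v$), so that the obstruction $col(uv_1)=col(v_1v_2)$ arises at the $v_1$-end and Lemma~\ref{Lem:PCp} applies directly without any reversal; your choice $v_1 = v$ forces you to reverse $P$ before invoking the lemma, but the two arguments are otherwise identical.
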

\begin{proof}
  Fix a vertex $v$, let $P=v_1v_2\cdots v_t$ be a longest PC path in $G$ with $v_t=v$. Suppose, to the contrary, that $P$ is not a Hamilton path. Then for a vertex $u \in V(G)\setminus V(P)$, there holds $col(uv_1)=col(v_1v_2)$. Apply Lemma \ref{Lem:PCp} to the pair $(P,u)$. We get a longer PC path starting at $v_t=v$, a contradiction.
\end{proof}

\begin{lemma}[Spanning nice shovel]\label{Lem:SPANING}
     Let $G$ be a mono-$C_3$-free edge-colored $K_n$ with $n\geq 3$. Then $G$ contains a spanning subgraph which is a nice $(n-2)$-shovel.
\end{lemma}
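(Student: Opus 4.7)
I reformulate the lemma. A spanning nice $(n-2)$-shovel in $G$, comprising the shovel-path $w_1w_2\cdots w_{n-2}$ and the triangle $u_1u_2u_3$ with $u_1=w_1$ and center edge $u_2u_3$, is equivalent to a PC Hamilton path $p_1p_2\cdots p_n$ (where $p_i:=w_{n-1-i}$ for $1\le i\le n-2$, $p_{n-1}:=u_2$ and $p_n:=u_3$) together with the tail chord $p_{n-2}p_n$ satisfying
\[
col(p_{n-2}p_n)\notin\{col(p_{n-1}p_n),\;col(p_{n-3}p_{n-2})\}\qquad(\star)
\]
(the second forbidden color is vacuous when $n=3$). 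The first clause of $(\star)$ is the center-edge condition on the triangle $p_{n-2}p_{n-1}p_n$ (together with PC, which gives $col(p_{n-1}p_n)\neq col(p_{n-2}p_{n-1})$ automatically); the second is the nice-shovel condition on the first shovel-path edge $w_1w_2=p_{n-2}p_{n-3}$. The task is therefore to produce a PC Hamilton path meeting $(\star)$.

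\textbf{Construction.} For $n=3$, any triangle of the mono-$C_3$-free $K_3$ is $2$- or $3$-colored and has a center edge, yielding a nice $1$-shovel directly. For $n\ge 4$, start with a PC Hamilton path $P=p_1p_2\cdots p_n$ provided by Lemma~\ref{Lem:STAR}. If $(\star)$ already holds we are done. Otherwise I perform tail surgery. If $col(p_{n-2}p_n)=col(p_{n-1}p_n)=\beta$, the triangle $p_{n-2}p_{n-1}p_n$ is $2$-colored with $col(p_{n-2}p_{n-1})=\alpha\ne\beta$; I extract $p_{n-1}$ from $P$ and reinsert it earlier via Lemma~\ref{Lem:PCp} (the shortened path $p_1\cdots p_{n-2}p_n$ is PC whenever the second clause of $(\star)$ holds), obtaining a new PC Hamilton path whose tail chord color is now different from $\beta$. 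If instead only the second clause fails, so $col(p_{n-3}p_{n-2})=col(p_{n-2}p_n)=\gamma$, then mono-$C_3$-freeness applied to $p_{n-3}p_{n-2}p_n$ forces $col(p_{n-3}p_n)\ne\gamma$, and I relocate $p_n$ to sit between $p_{n-3}$ and $p_{n-2}$, producing $p_1\cdots p_{n-3}p_np_{n-2}p_{n-1}$; its tail chord $p_np_{n-1}$ has color $col(p_{n-1}p_n)$, distinct from $col(p_{n-2}p_{n-1})$ by the PC property of $P$, so the first clause of $(\star)$ is automatic in the new path, and any PC violation at $p_{n-3}$ caused by the relocation is repaired by a further application of Lemma~\ref{Lem:PCp}.

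\textbf{Main obstacle.} The hardest part is handling the simultaneous failure of both clauses of $(\star)$, and ensuring that surgery to remedy one failure does not trigger the other in the modified path. I expect to resolve this by a detailed case analysis on the mono-$C_3$-free $K_4$ spanned by the last four vertices $p_{n-3},p_{n-2},p_{n-1},p_n$, using the mono-$C_3$-free condition on all four of its induced triangles to force color relations, combined with the flexibility in Lemma~\ref{Lem:STAR} to restart from a PC Hamilton path with a different starting vertex when local tail surgery gets stuck. A finite-descent monovariant (for example, the number of color repetitions at the tail) then guarantees termination of the iterative surgery.
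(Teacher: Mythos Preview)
Your reformulation is correct: a spanning nice $(n-2)$-shovel is exactly a PC Hamilton path $p_1\cdots p_n$ whose tail chord $p_{n-2}p_n$ satisfies $(\star)$, and beginning from a PC Hamilton path supplied by Lemma~\ref{Lem:STAR} is natural. The gap is that the promised ``finite-descent monovariant'' is never specified, and none is evident from the surgeries you describe. In your first surgery (first clause fails, second holds) you delete $p_{n-1}$ and reinsert it; but Lemma~\ref{Lem:PCp} needs $col(p_{n-1}v_i)=col(v_iv_{i+1})$ for some $i$ on the shortened path, which you do not verify, and in any case the resulting Hamilton path ends $\ldots p_{n-3}p_{n-2}p_n$ with new tail chord $p_{n-3}p_n$, about whose colour you have no control --- both clauses of $(\star)$ may fail afresh. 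In your second surgery the new tail chord $p_np_{n-1}$ does satisfy the first clause, but the second clause now reads $col(p_np_{n-1})\neq col(p_{n-3}p_n)$, and both of these colours are merely known to differ from $\gamma$, so they may coincide; iterating the same move then cycles among the last four vertices with no visible progress. Invoking a restart from a different endpoint via Lemma~\ref{Lem:STAR}, or a case analysis on the terminal $K_4$, does not by itself supply a progress measure: nothing rules out the same obstruction reappearing with relabelled vertices.

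The paper avoids this by putting the extremal choice on the right object. It takes a \emph{largest} nice shovel $Y$ (triangle $u_1u_2u_3$ with center edge $u_2u_3$, path $w_1\cdots w_t$, $w_1=u_1$) and shows that any vertex $v\notin V(Y)$ can be absorbed into a strictly larger nice shovel, via a short two-case split on whether $col(vu_1)\in col(u_1,\{u_2,u_3\})$. Here the size of the nice shovel is the monovariant, and every step visibly increases it. In your language, the fix is to maximise not over PC Hamilton paths but over PC paths that already carry a $(\star)$-satisfying tail --- which is exactly the paper's choice.
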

\begin{proof}
    When $n=3$, it is trivial.
    When $n=4$, if $G$ has a vertex $v$ satisfying $d^c(v)=2$ or $G$ is a PC graph, then we can obtain a nice shovel immediately. Otherwise since no monochromatic triangle exists, $G$ has only one vertex of color degree 1 and three vertices of color degree 3, which also implies the existence of a spanning nice shovel.

    Now consider the case $n\geq5$. Let $Y$ be a largest PC shovel contained in $G$ with $S=u_1u_2u_3u_1$ being the triangle in $Y$, $u_1=w_1$ and $P=  w_1w_2\cdots w_t$ being the remaining path in $Y$. The above analysis implies $t\geq 2$. Suppose that $V(Y)\neq V(G)$. Let $v$ be a vertex in $V(G)\setminus V(Y)$.

    \begin{case}
         $col(vu_1)\not\in col(u_1,\{u_2,u_3\})$.
    \end{case}
    For each $j\in [1,t-1]$, define $P_j=w_1w_2\cdots w_jvw_{j+1}\cdots w_t$ and define $P_t=w_1w_2\cdots w_tv$. If $P_j$ is a PC path for some $j\in [2,t]$, then $P_j$ and the triangle $u_1u_2u_3u_1$ form a nice shovel larger than $Y$, a contradiction. So none of the paths $P_t,P_{t-1},\ldots, P_2$ is a PC path. This implies that $col(vw_j)=col(w_jw_{j-1})$ for all $j\in [2,t]$. Since the triangle $vw_2w_1v$ is not monochromatic we have $col(vw_2)\neq col(vw_1)$.  Therefore $P_1=w_1vw_2\cdots w_t$ and the triangle $u_1u_2u_3u_1$ form a nice shovel larger than $Y$, a contradiction.

    \begin{case}
    $col(vu_1)\in col(u_1,\{u_2,u_3\})$.
    \end{case}
    Without loss of generality, assume that $col(vu_1)=col(u_1u_2)=\alpha$. For each $j\in [2,t-1]$, define $P_j=w_1w_2\cdots w_ju_3w_{j+1}\cdots w_t$ and define $P_t=w_1w_2\cdots w_tu_3$. If $P_j$ is a PC path for some $j\in [2,t]$, then $P_j$ and the triangle $u_1u_2vu_1$ form a nice shovel larger than $Y$, a contradiction. So none of the paths $P_t,P_{t-1},\ldots, P_2$ is a PC path. This implies that $col(u_3w_j)=col(w_jw_{j-1})$ for all $j\in [2,t]$.
    Since the triangle $u_3w_2w_1u_3$ is not monochromatic we have $col(u_3w_2)\neq col(u_3w_1)$.  If $col(u_3w_1)\neq \alpha$, then  $P_1=w_1u_3w_2\cdots w_t$ and the triangle $u_1u_2vu_1$ form a nice shovel larger than $Y$(see Figure \ref{fig:lem2-1}), a contradiction. Therefore $col(u_3w_1)=col(u_2w_1)=col(vw_1)=\alpha$ and $\alpha \neq col(w_1w_2)$. Now consider the triangle $vu_2u_3v$. None of the three edges in the triangle is of color $\alpha$ (otherwise together with $u_1$, we get a monochromatic triangle). Without loss of generality, assume $u_2u_3$ is a center edge of the triangle $vu_2u_3v$. Then $P_1=vw_1w_2\cdots w_t$ and the triangle $vu_2u_3v$ form a nice shovel larger than $Y$(see Figure \ref{fig:lem2-2}), a contradiction.
    \begin{figure}[h]
    \centering
    \subfigure[{$col(u_3u_1)\neq \alpha$}]
    {
    \includegraphics[width=0.3\linewidth]{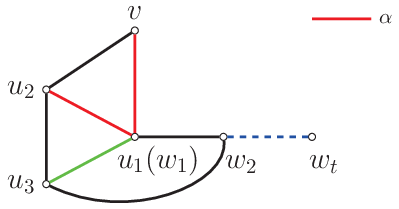}
    \label{fig:lem2-1}
     }
       \hskip 1.5cm
    \subfigure[{$col(u_3u_1)=\alpha$}]
    {
     \includegraphics[width=0.33\linewidth]{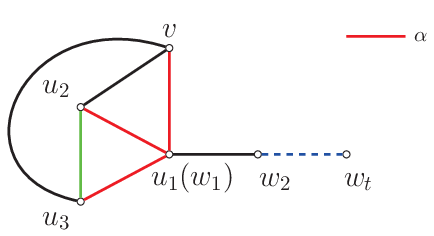}
     \label{fig:lem2-2}
    }
\caption{Cases for a larger nice shovel}
\end{figure}
\end{proof}

\begin{lemma}\label{Lem:S AND Y}
Let $G$ be a mono-$C_3$-free edge-colored $K_n$. Let $S$ and $Y$ be vertex-disjoint subgraphs of $G$ such that $S=u_1u_2u_3u_1$ is a triangle with $u_2u_3$ being a center edge and $Y$ is a nice shovel or a vertex set of size at most 2. Then $G$ contains a PC path $P=z_1z_2\cdots z_\ell$ such that $V(P)=V(S)\cup V(Y)$, $z_1=u_1$ and $col(z_1z_2)\in col(u_1,\{u_2,u_3\})$.
\end{lemma}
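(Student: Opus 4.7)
Write $\alpha:=col(u_1u_2)$, $\beta:=col(u_1u_3)$, $\gamma:=col(u_2u_3)$; by the center-edge hypothesis on $u_2u_3$ one has $\gamma\notin\{\alpha,\beta\}$, so both $u_1u_2u_3$ and $u_1u_3u_2$ are PC $2$-paths starting at $u_1$ with first-edge color in $\{\alpha,\beta\}$. The strategy is to take one of these two initial $2$-paths and then attach a PC traversal of $V(Y)$ (or, in the small-$Y$ case, append the remaining vertices directly). I split on the type of $Y$.

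\emph{Case 1: $|V(Y)|\le 2$.} A short case analysis. For $V(Y)=\emptyset$, take $P=u_1u_2u_3$. For $V(Y)=\{v\}$, try to append $v$: either $col(u_3v)\neq\gamma$ and $P=u_1u_2u_3v$ works, or $col(u_3v)=\gamma$, in which case $col(u_2v)\neq\gamma$ (else $u_2u_3v$ is monochromatic) and $P=u_1u_3u_2v$ works. For $V(Y)=\{v_1,v_2\}$, produce a $4$-vertex PC prefix via the previous step and then either end-append the remaining vertex or insert it via Lemma~\ref{Lem:PCp}; the absence of monochromatic triangles excludes the color patterns that would block every attempt.

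\emph{Case 2: $Y$ is a nice $t$-shovel} with path $w_1w_2\cdots w_t$ and triangle $w_1u_2'u_3'$, where $u_2'u_3'$ is a center edge and $col(w_1w_2)\notin\{col(w_1u_2'),col(w_1u_3')\}$. Put $\gamma':=col(u_2'u_3')$. The PC Hamilton paths of $Y$ produced by the shovel structure have endpoints in $\{w_t,u_2',u_3'\}$; prepending a traversal of $S$ gives the two template shapes
\[
u_1\,u_{i_1}\,u_{i_2}\;w_t\,w_{t-1}\cdots w_1\,u_{j_1}'\,u_{j_2}'
\quad\text{and}\quad
u_1\,u_{i_1}\,u_{i_2}\;u_{j_1}'\,u_{j_2}'\,w_1\,w_2\cdots w_t,
\]
with $\{i_1,i_2\}=\{2,3\}$ and $\{j_1,j_2\}=\{2,3\}$. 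PC-ness of a candidate reduces to two color constraints at the bridging edge $u_{i_2}z$ with $z\in\{w_t,u_2',u_3'\}$: the $S$-side constraint $col(u_{i_2}z)\neq\gamma$ and the $Y$-side constraint that $col(u_{i_2}z)$ differ from the first edge of the $Y$-tail at $z$ (which is $\gamma'$ when $z\in\{u_2',u_3'\}$ and $col(w_tw_{t-1})$ when $z=w_t$). The $S$-side constraint is always meetable by choosing the orientation of $S$, since $col(u_2z)=col(u_3z)=\gamma$ would give a monochromatic triangle $u_2u_3z$. The $Y$-side I would handle by cycling through $z\in\{u_2',u_3'\}$ and, if both entries into the triangle of $Y$ fail, falling back to $z=w_t$; the nice-shovel condition on $w_1$ together with the center-edge condition on $u_2'u_3'$ should propagate enough structure so that at least one entry produces a valid bridging edge.

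\textbf{Main obstacle.} The hard part will be simultaneously satisfying both interface constraints across all orientations and entry vertices. The subtle scenario is when $col(u_iu_j')\in\{\gamma,\gamma'\}$ for every $i,j\in\{2,3\}$, so that every candidate of the second shape fails; one must then exploit the extra slack at $z=w_t$, using mono-$C_3$-freeness on the triples $\{u_{i_2},w_t,w_{t-1}\}$ and $\{u_{i_2},w_1,u_j'\}$. A robust fallback, whenever the direct construction runs into an awkward corner case, is to remove a single vertex of $V(Y)$ from the target path and reinsert it via Lemma~\ref{Lem:PCp} at a collision site on the resulting shorter PC path.
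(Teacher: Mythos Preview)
Your Case~1 is fine (the paper simply leaves it to the reader), and your template of shape $u_1u_{i_1}u_{i_2}u'_{j_1}u'_{j_2}w_1\cdots w_t$ succeeding whenever some $col(u_iu'_j)\notin\{\gamma,\gamma'\}$ is exactly the paper's opening observation. The genuine gap is in the hard scenario you yourself flag, where every cross edge between $\{u_2,u_3\}$ and $\{u'_2,u'_3\}$ is colored in $\{\gamma,\gamma'\}$.

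In that scenario your two templates are not enough and the fallbacks you sketch do not close the gap. The entry at $z=w_t$ can fail: nothing prevents $col(u_2w_t)=col(u_3w_t)=col(w_tw_{t-1})$ (this is mono-$C_3$-free since $\gamma$ need not equal $col(w_tw_{t-1})$), which kills both orientations of template~1. Your Lemma~\ref{Lem:PCp} fallback then needs a PC path on $V(S)\cup V(Y)$ minus one vertex that still starts at $u_1$ with first color in $\{\alpha,\beta\}$; building such a path runs into the same interface problems (for instance $u_1u_3w_t\cdots$ requires $\beta\neq col(u_3w_t)$, which you have no control over), and you have not shown how to produce it. The paper's resolution is a structural idea your templates cannot express: once the four cross edges are forced into $\{\gamma,\gamma'\}$, mono-$C_3$-freeness pins down the pattern (up to symmetry $col(u_2u'_2)=col(u_3u'_3)=\gamma$, $col(u_2u'_3)=col(u_3u'_2)=\gamma'$), and then one uses a \emph{zigzag} path of the shape $u_1u_2u'_3u_3u'_2w_1w_2\cdots w_t$ that threads through \emph{all four} cross edges rather than a single bridge. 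A short residual case ($col(u_1u_2)=\gamma'$ and $col(w_1u'_3)=\gamma$) is handled by one application of Lemma~\ref{Lem:PCp} to insert $u_3$ into the tail. So the missing idea is precisely this zigzag construction exploiting the forced $2$-coloring of the cross edges; your single-bridge templates together with an unspecified Lemma~\ref{Lem:PCp} insertion do not substitute for it.
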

\begin{proof}
We leave the easy case $|Y|\leq 2$ to be checked by readers. Now assume that $Y$ is a nice shovel. Let $S^{\prime}  =v_1v_2v_3v_1$ be the unique triangle in $Y$ with $v_2v_3$ being the center edge. Let $w_1=v_1$ and $w_1w_2\cdots w_t$ be the unique Hamilton path in $Y-\{v_2, v_3\}$. Let $col(u_2u_3)=c_1$ and $col(v_2v_3)=c_2$. If there is an edge between $\{u_2,u_3\}$ and $\{v_2,v_3\}$ assigned a color distinct to both $c_1$ and $c_2$, then a desired PC path can be found immediately. This argument, together with the mono-$C_3$-free condition, implies that $col(u_2u_3)\neq col(v_2v_3)$.
Without loss of generality, assume $col(u_2u_3)=1$, $col(v_2v_3)=2$ and $col(u_2v_2)=1$. The mono-$C_3$-free condition again forces $col(u_3v_2)=2$, $col(u_3v_3)=1$ and $col(u_2v_3)=2$ (see Figure \ref{fig:1122}).
\begin{figure}[h]
    \centering
    \includegraphics[width=0.55\linewidth]{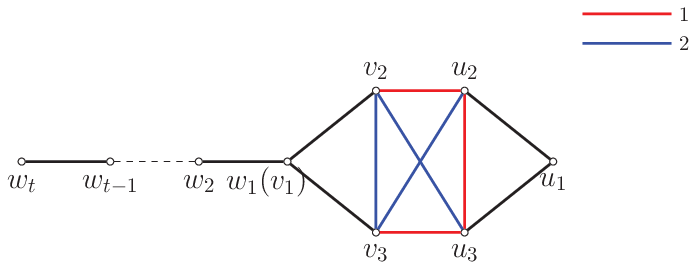}
     \caption{Colors between $S$ and $Y$}
     \label{fig:1122}
\end{figure}

If $col(u_1u_2)\ne 2$ or $col(v_1v_3)\ne 1$,  then
 $u_1u_2v_3u_3v_2v_1w_2\cdots w_t$ or $u_1u_2v_2u_3v_3v_1w_2\cdots w_t$ is a desired  PC path. For the remaining case that $col(u_1u_2)= 2$ and $col(v_1v_3)= 1$, we consider the PC path $P_1=v_3v_1w_2\cdots w_t$. Since $col(u_3v_3)=col(v_3v_1)=1$,  by Lemma \ref{Lem:PCp},  $G$ contains a PC  path $P^{\prime}  _1=x_1x_2\cdots x_{t+2}$,  where $V(P_1^{\prime})=V(P_1)\cup\{u_3\}$, $x_1=v_3$ and  $col(x_1x_2)=col(v_3v_1)$.  Then  $u_1u_2v_2x_1x_2\cdots x_{t+2}$ is a desired  PC path. This complets the proof.
\end{proof}

\section{Directed almost spanning trees in mono-$C_3$-free tournament}\label{sec:3}
Let $G$ be an edge-colored graph. If there is a function $h:V(G)\rightarrow col(G)$ such that $col(uv)=h(u)$ or $h(v)$ for each edge $uv\in E(G)$. Then we can define a directed graph $D_G$ with $V(D_G)=V(G)$ and $uv\in A(D_G)$ if and only if $col(uv)=h(u)$ and $col(uv)\neq h(v)$. Such an edge-colored graph $G$ is called ``degenerate'' in \cite{li_classification_2020}. An easy observation in \cite{li_vertexdisjoint_2020} states that each PC cycle in $G$ must be a directed cycle in $D_G$ and vise versa.
This one-to-one relationship does not hold if we study paths or trees instead of cycles. One can easily check that a PC path in $G$ is not necessarily a directed path in $D_G$. However, certain oriented trees in $D_G$ correspond to PC trees in $G$.

Let $G$ be a mono-$C_3$-free edge-colored $K_n$. When $G$ is degenerate, we will see that $D_G$ acts like a tournament. In the following, we define such a digraph and obtain an oriented-tree-embedding result by several lemmas. This embedding result is applied in the proof of Theorem \ref{Thm:k-spider}.

\begin{definition}\label{def:monoC3-free-tournament}
    Given $t\geq 2$, let  $D$ be a multipartite tournament such that each partite set has at most 2 vertices and $N^+\left( x \right) \cap N^+\left( y \right) =\emptyset$ for every pair of distinct vertices $x$ and $y$ from a same partite set. Then we say $D$ is a {\it mono-$C_3$-free tournament}.
\end{definition}

\begin{lemma}[Li \cite{li_properly_2021}]\label{Lem:D-cycle}
Given $t\geq2$, let  $D$ be a strongly connected mono-$C_3$-free tournament with $|V(D)|\ge 4$,  then each vertex of $D$ is contained in directed cycles of lengths from $4$ to $|V(D)|$.
\end{lemma}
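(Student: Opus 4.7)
The plan is to establish vertex-pancyclicity in the range $[4,|V(D)|]$ via a two-stage argument: first show that every vertex of $D$ lies on a Hamilton cycle, then descend through cycle lengths by a chord-shortening procedure. Throughout, the two nonstandard features of a mono-$C_3$-free tournament — missing arcs inside partite sets, and the constraint $N^+(x)\cap N^+(y)=\emptyset$ for vertices $x,y$ in the same part — must be leveraged together. In particular, for any $2$-element part $\{x,y\}$, every outside vertex $z$ falls into exactly one of three classes: $z\to x$ and $z\to y$; or $z\to x$ and $y\to z$; or $x\to z$ and $z\to y$. The case ``$x\to z$ and $y\to z$'' is forbidden, and this asymmetry is the engine of the argument.

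For Hamiltonicity through a fixed vertex $v$, I would take a longest directed cycle $C=w_1w_2\cdots w_\ell w_1$ through $v$, of length $\ell$, and assume $\ell<|V(D)|$ for contradiction. Using strong connectivity I would locate a vertex $u\notin V(C)$ together with a short directed detour $w_i\to\cdots\to u\to\cdots\to w_j$ through $V(D)\setminus V(C)$, and attempt to splice this detour into $C$ to produce a strictly longer cycle through $v$. The only way the splice can fail is for the partite structure to block every candidate consecutive pair, but because partite sets have size at most two I can shift the splice to the partner vertex of any obstructing endpoint, and the disjoint-out-neighborhood condition guarantees the needed arc exists. This contradiction delivers a Hamilton cycle $H$ through $v$.

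For the length reduction I would argue by reverse induction on $\ell$, starting from $H$ and producing a cycle of length $\ell-1$ through $v$ for each $\ell\geq 5$. Writing $C_\ell=vz_1z_2\cdots z_{\ell-1}v$, I would look for a short forward chord $z_i\to z_{i+2}$ (with $z_0=v$), whose presence immediately yields a cycle of length $\ell-1$ through $v$ by skipping $z_{i+1}$. If no such chord exists then every short chord runs backward, and the resulting rigidity combined with strong connectivity and the forbidden configuration forces a small auxiliary directed cycle outside $C_\ell$ that can be spliced in to recover length $\ell-1$.

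The main obstacle is the subcase in which two consecutive cycle vertices $z_i,z_{i+1}$ belong to the same partite set. Then there is no arc between them at all, the usual short-chord logic breaks, and one must instead route through the partner vertex using the three-class decomposition of $V(D)\setminus\{z_i,z_{i+1}\}$ described above to force the existence of a suitable shortcut; the absence of a common out-neighbor for the pair eliminates precisely the configuration that would otherwise obstruct the shortcut. A careful case analysis along these lines, possibly combined with an outer induction on $|V(D)|$ that peels off one partite set at a time while preserving strong connectivity and the mono-$C_3$-free structure, should close the argument. The base case $|V(D)|=4$ asks only for a $4$-cycle through each vertex, which a direct check of the few possible configurations of two parts of sizes adding to $4$ readily confirms.
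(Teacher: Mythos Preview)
The paper does not prove this lemma; it is quoted verbatim from \cite{li_properly_2021} and used as a black box, so there is no in-paper argument to compare your proposal against.

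On the proposal itself: the two-stage plan (Hamilton cycle through $v$, then shorten one vertex at a time) is a sensible shape, but there is a concrete slip in the shortening step. You identify as the ``main obstacle'' the case in which two \emph{consecutive} cycle vertices $z_i,z_{i+1}$ lie in the same partite set. That case is impossible: consecutive vertices on a directed cycle are joined by an arc, and vertices in the same partite set are nonadjacent. The real obstacle is that $z_i$ and $z_{i+2}$ may share a partite set, in which case there is no arc between them in either direction and your dichotomy ``either $z_i\to z_{i+2}$ or the chord runs backward'' collapses. Your sketch does not handle this, and the phrase ``a careful case analysis along these lines \ldots\ should close the argument'' is not yet an argument. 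The Hamiltonicity half is similarly hand-wavy about exactly how the splice is rerouted when a partite obstruction appears; the disjoint-out-neighborhood condition tells you that $x$ and $y$ cannot both dominate a given $z$, but it does not by itself produce the arc you need for the detour. So as written the proposal is an outline with a misdiagnosed obstruction rather than a proof.
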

\begin{lemma}\label{Lem:D-cycle+}

Given $t\geq2$, let  $D$ be a mono-$C_3$-free tournament. Let $D_0,D_1,\ldots,D_p$ ($p\geq 0$) be  strongly  connected components of $D$ such that for $0\leq i<j\leq p$, either $D_j\rightarrow D_i$ or there is no arc between $D_i$ and $D_j$. If $|V(D)|\ge 3$, then the following statements hold.\\
 $(a)$  If $p=0$, then $D$ contains a directed Hamilton cycle.\\
 $(b)$  If $p\geq 1$  and $|N^+_D(v)|\geq 1$ for all $v\in V(D)$, then each partite set of size $2$ is contained in $D_0$, $D-D_0$ is a tournament and $D_j\rightarrow D_i$ for $0\leq i<j\leq p$.\\
 $(c)$ If $|N^+_D(v)|\geq 1$ for all $v\in V(D)$, then $D$ contains a directed Hamilton path.\\
 $(d)$ If there are at least two partite sets of size $2$ in $D$, then all the partite sets of size $2$ are contained in $D_0$, $D-D_0$ is a tournament and $D_j\rightarrow D_i$ for $0\leq i<j\leq p$.
\end{lemma}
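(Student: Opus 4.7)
The plan is to handle the four parts in order: part (a) follows from Lemma \ref{Lem:D-cycle} after a small-case check; part (b) is the heart of the argument, proved by a case analysis that will be reused for (d); part (c) is then obtained by concatenating Hamilton paths through the structure supplied by (b). For (a), Lemma \ref{Lem:D-cycle} directly produces a directed $|V(D)|$-cycle when $|V(D)|\geq 4$. For the edge case $|V(D)|=3$, strong connectedness rules out a size-$2$ partite set $\{a_1,a_2\}$: the third vertex $b$ cannot be dominated by both $a_1$ and $a_2$ by disjointness of out-neighborhoods, so one of $a_1,a_2$ has no out-neighbor at all in $D$, breaking strong connectivity. Thus $D$ is a strongly connected tournament on three singletons, i.e., a directed triangle.

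For (b), first I would observe that $|V(D_0)|\geq 3$: strong components of a multipartite tournament have size $1$ or at least $3$ (no $2$-cycle is possible), and $|V(D_0)|=1$ would force the lone vertex to have no out-neighbor anywhere (no arc leaves $D_0$ toward later components), contradicting $|N^+|\geq 1$. Next I would show that every size-$2$ partite set $\{x_1,x_2\}$ lies in $D_0$. If both $x_1,x_2$ sit outside $D_0$, then any $v\in V(D_0)$ satisfies $x_1\to v$ and $x_2\to v$ by the component ordering, contradicting $N^+(x_1)\cap N^+(x_2)=\emptyset$. If instead $x_1\in V(D_0)$ but $x_2\in V(D_j)$ with $j\geq 1$, then for each $v\in V(D_0)\setminus\{x_1\}$ the ordering forces $x_2\to v$, so disjointness forces $v\to x_1$; combined with the fact that $x_1$ cannot dominate any vertex outside $D_0$, this yields $N^+(x_1)=\emptyset$, again a contradiction. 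Once every size-$2$ partite set lies in $D_0$, any two vertices of $D-D_0$ are in distinct partite sets and hence adjacent, so $D-D_0$ is a tournament; and a ``no arc'' cross-pair $D_i,D_j$ with $i<j$ would force the two components to jointly contain a size-$2$ partite set outside $D_0$, a contradiction. Hence $D_j\to D_i$ whenever $i<j$.

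For (c), if $p=0$ apply (a); otherwise use (b) to get a Hamilton cycle of $D_0$ (apply (a) to $D_0$, using $|V(D_0)|\geq 3$) and a Hamilton path of each tournament $D_j$, $j\geq 1$. Since $D_j\to D_i$ means every vertex of $D_j$ dominates every vertex of $D_i$ for $j>i$, I would concatenate the Hamilton paths of $D_p,\ldots,D_1$ via single cross-arcs, then enter $D_0$ at any vertex and unroll its Hamilton cycle as a spanning path of $D_0$. For (d), I would rerun the case analysis of (b) without using $|N^+|\geq 1$: the ``$x_1,x_2$ both outside $D_0$'' subcase still gives a contradiction purely from disjointness and $V(D_0)\neq\emptyset$. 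In the remaining subcase $x_1\in V(D_0)$, $x_2\in V(D_j)$, $j\geq 1$, the second size-$2$ partite set $\{y_1,y_2\}$ takes over: it cannot lie entirely outside $D_0$ (by the previous subcase), so at least one of $y_1,y_2$ is in $D_0$, and tracing forced arc directions shows that both $y_1\to x_1$ and $y_2\to x_1$, contradicting $N^+(y_1)\cap N^+(y_2)=\emptyset$. The tournament structure of $D-D_0$ and the arc-direction statement then follow as in (b).

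The hard part will be the case analysis behind (b), especially the subcase $x_1\in V(D_0)$, $x_2\in V(D_j)$: arc directions between $x_1,x_2$ and a generic $v\in V(D_0)$ must be tracked carefully so that $|N^+|\geq 1$ closes the argument. The parallel step in (d) requires noticing that a second size-$2$ partite set lets us force both $y_1$ and $y_2$ to dominate $x_1$, thereby replacing the $|N^+|\geq 1$ hypothesis by the disjoint-out-neighborhood condition on $\{y_1,y_2\}$.
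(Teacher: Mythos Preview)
Your proposal is correct. Parts (a), (b), and (c) match the paper's argument almost exactly; the only cosmetic difference is in the split subcase of (b), where you show $N^+_D(x_1)=\emptyset$ by proving every $v\in V(D_0)\setminus\{x_1\}$ dominates $x_1$, whereas the paper uses $|N^+_D(x)|\ge 1$ directly to pick a single $z\in V(D_0)$ with $x\to z$ and then observes $y\to z$ as well. These are two phrasings of the same contradiction.

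For (d) you genuinely diverge from the paper. You re-enter the case analysis of (b) and let the second size-$2$ partite set $\{y_1,y_2\}$ stand in for the missing hypothesis $|N^+|\ge 1$: once $x_1\in V(D_0)$ and every $v\in V(D_0)\setminus\{x_1\}$ is forced to dominate $x_1$, both $y_1$ and $y_2$ end up dominating $x_1$, violating $N^+(y_1)\cap N^+(y_2)=\emptyset$. The paper instead proves a clean structural fact: for any two size-$2$ partite sets $W_a,W_b$, no vertex of $D[W_a\cup W_b]$ can have in-degree $2$ there (that would put it in $N^+(x_a)\cap N^+(y_a)$ or $N^+(x_b)\cap N^+(y_b)$), so $D[W_a\cup W_b]$ is a directed $4$-cycle; hence the union of all size-$2$ partite sets is strongly connected and must sit inside $D_0$. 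The paper's route is a one-line structural observation that explains \emph{why} size-$2$ classes cluster in $D_0$; your route is a perfectly valid recycling of the (b) machinery that avoids introducing the $4$-cycle lemma. Either way the remaining conclusions about $D-D_0$ and the ordering of the $D_j$ follow identically.
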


\begin{proof}
    The case $|V(D)|=3$ can be verified immediately. Assume $|V(D)|\geq 4$. Then the statement $(a)$ holds by Lemma \ref{Lem:D-cycle}. To prove $(b)$, suppose that some partite set  $W=\{x,y\}$ is not contained in $D_0$. If $W\cap V(D_0)=\emptyset$, then each $z\in V(D_0)$ must be a common out-neighbour of $x$ and $y$, a contradiction. If $W\cap V(D_0)\neq \emptyset$, say $W\cap V(D_0)=\{x\}$ and $y\in D_i$ for some $i\in[1,p]$, then by the condition that $|N^+_D(x)|\geq 1$, there must be a vertex $z\in V(D_0)$ such that $x\rightarrow z$. Note that $D$ is a multipartite tournament, we have $y\rightarrow z$, a contradiction. Therefore each partite set of size $2$ is contained in $D_0$, which implies that $D-D_0$ is a tournament and $D_j\rightarrow D_i$ for $0\leq i<j\leq p$. So the statement $(b)$ holds. The statement $(c)$ can be obtained directly by statements $(a)$ and $(b)$.   To prove $(d)$, assume that  $W_1,W_2,\ldots,W_s$ are partite sets of size $2$ in $D$ with $s\geq2$ and  $W_i=\{x_i, y_i\}$ $(i\in[1,s])$. For distinct indices $a$ and $b$ in $[1,s]$, let $D^\prime=D[W_a\cup W_b]$. If there is a vertex  $v\in V(D^{\prime})$ such that $d^-_{D^{\prime}}(v)=2$, then either $v\in N^+(x_a) \cap N^+(y_a)$ or $v\in N^+(x_b) \cap N^+(y_b)$, a contradiction. So $D^{\prime}$ must be a directed cycle of length $4$. By the arbitrariness of $a$ and $b$,  we know that $D[\cup^s_{i=1}W_i]$ is a strongly connected digraph.  Therefore $D[\cup^s_{i=1}W_i]\subseteq D_0$ (otherwise each vertex $v\in V(D_0)$ is in $ N^+(x_i) \cap N^+(y_i)$ for every $W_i$),  which implies that $D-D_0$ is a tournament and $D_j\rightarrow D_i$ for $0\leq i<j\leq p$.  So the statement $(d)$ holds.
\end{proof}
\begin{lemma}\label{Lem:D-path}
    Let $D$ be a mono-$C_3$-free tournament. Let  ${P}=v_1v_2\cdots v_t $ be a directed path in $D$. Assume  there exists  a vertex $v\in V(D)\setminus V({P})$  such that   $v\rightarrow v_i$ for some $i\in[1, t]$.  Then $D$ contains a directed path ${P^{\prime}  }=u_1u_2\cdots u_{t+1}$,  where $V({P^{\prime}  })=V({P})\cup\{v \}$ and  $u_{t+1}=v_t$.
\end{lemma}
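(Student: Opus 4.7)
The plan is to insert $v$ into the existing path $P$ at a suitable location strictly before $v_t$, so that the endpoint condition $u_{t+1}=v_t$ is preserved automatically. The natural candidate for the insertion point is determined by the leftmost out-neighbor of $v$ on $P$.

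Concretely, I would let $j$ be the smallest index in $[1,t]$ with $v\to v_j$; such $j$ exists by hypothesis. If $j=1$, then simply prepending $v$ to $P$ yields the directed path $v\,v_1v_2\cdots v_t$, whose last vertex is $v_t$ as required. Otherwise $j\geq 2$, and the natural candidate is $P' = v_1\cdots v_{j-1}\,v\,v_j\cdots v_t$; this is a path of length $t+1$ ending at $v_t$, and it is a directed path provided that $v_{j-1}\to v$.

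The heart of the argument is therefore verifying $v_{j-1}\to v$, and this is where the defining properties of a mono-$C_3$-free tournament come into play. The arc $v\to v_{j-1}$ is excluded by the minimality of $j$. It remains to exclude the possibility that $v$ and $v_{j-1}$ are non-adjacent, i.e., lie in the same partite set. If they did, then Definition~\ref{def:monoC3-free-tournament} would give $N^+(v)\cap N^+(v_{j-1})=\emptyset$; however, $v_j$ is a common out-neighbor of $v$ (by assumption $v\to v_j$) and of $v_{j-1}$ (since $v_{j-1}v_j$ is an arc of $P$), a contradiction. Hence $v$ and $v_{j-1}$ belong to different partite sets, so they are joined by an arc, which must be $v_{j-1}\to v$.

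The only place where subtlety creeps in is this non-adjacency case, which is mildly annoying because multipartite tournaments, unlike ordinary tournaments, admit pairs of non-adjacent vertices; but the ``no common out-neighbor within a partite set'' condition is designed precisely to neutralize such obstructions, and once it is invoked the proof reduces to a clean two-case analysis on whether $j=1$ or $j\geq 2$.
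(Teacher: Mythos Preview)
Your proposal is correct and follows essentially the same argument as the paper: both take $j$ to be the minimum index with $v\to v_j$, dispose of $j=1$ by prepending $v$, and for $j\ge 2$ use that $v_j\in N^+(v)\cap N^+(v_{j-1})$ to force $v$ and $v_{j-1}$ into distinct partite sets, whence minimality of $j$ yields $v_{j-1}\to v$ and the insertion $v_1\cdots v_{j-1}vv_j\cdots v_t$ works.
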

    \begin{proof}
        Let $j$ be a minimum integer in $[1,t]$ such that $v\rightarrow v_j$. If $j=1$, then ${P^{\prime}  }=vv_1\cdots v_{t-1}v_t$ is a desired directed path. If $j\geq 2$, then
        $v_j\in N^+(v)\cap N^+(v_{j-1})$. This implies that $v$ and $v_{j-1}$ belong to distinct partite set. By the assumption of $j$, we have $vv_{j-1}\not\in A(D)$. Therefore $v_{j-1}\rightarrow v$ and ${P^{\prime}  }=v_1 \cdots v_{j-1}vv_jv_{j+1}\cdots v_t$ is a directed path in $D$.
    \end{proof}

Given positive integers $\{l_i\}^k_{i=2}(k\geq 2)$,  let $P_i$ be a directed path of length $l_i$ from $u_i$ to $u$  for each $i\in[2,k]$, satisfying  $V(P_i)\cap V(P_j)=\{u\} $  for $2\leq i<j\leq k$. Let $x,y$ be two vertices not in $\cup ^k_{i=2}V(P_i)$. Define a directed tree $T^*_{ l_2, \ldots, l_k}$ (see Figure \ref{fig:d-tree}) as
$$V(T^*_{ l_2, \ldots, l_k})=\{x,y\}\cup\bigcup_{i=2}^k V(P_i)\text{~~and~~} A(T^*_{ l_2, \ldots, l_k})=\{ux,uy\}\cup\bigcup_{i=2}^k A(P_i). $$ We call $u$ the {\it root} of $T^*_{ l_2, \ldots, l_k}$.

\begin{figure}[h]
   \centering
    \includegraphics[width=0.25\linewidth]{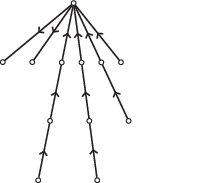}
    \caption{The tree $T^*_{3, 3, 2,1}$}
    \label{fig:d-tree}
\end{figure}

\begin{theorem}\label{Thm:D-spider}
  For $k\ge 2$ and $\ell_2\geq \ell_3 \geq \cdots \geq \ell_k\geq 1$,  let $D$ be a mono-$C_3$-free tournament on $n\geq\sum^k_{i=2}{\ell_i}+2k^2+2k+6$ vertices. If $|N^+_D(v)|\geq 2$  for all $v\in V(D)$.  Then $D$ contains  a  subgraph isomorphic to   $T^*_{\ell_2,\ldots,\ell_k}$.
\end{theorem}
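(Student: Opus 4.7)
My approach is to exploit the strong-component structure of $D$ given by Lemma~\ref{Lem:D-cycle+} to pick the root, and then to construct the $k-1$ in-paths one by one using the cycles of all lengths provided by Lemma~\ref{Lem:D-cycle}.

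First, I would apply Lemma~\ref{Lem:D-cycle+} to decompose $D$ into strong components $D_0, D_1, \ldots, D_p$ ordered so that $D_j \to D_i$ whenever $j > i$. Because $|N^+(v)| \geq 2$ for every $v$, the sink component $D_0$ must contain at least three vertices, and by part~(b) of that lemma every partite pair of size $2$ lies inside $D_0$; in particular $D \setminus D_0$ is an honest tournament, and $D_0$ itself admits a Hamilton cycle by part~(a). An averaging argument on in-degrees in $D$, whose average is at least $(n-2)/2$, then produces a vertex $u \in V(D_0)$ with $|N^-_D(u)| \geq k - 1$; the leaves $x, y$ are chosen as two out-neighbors of $u$, treating as a special subcase the degenerate situation in which two such neighbors cannot be simultaneously selected inside $D_0$ (handled by swapping $u$ for a nearby vertex along the Hamilton cycle of $D_0$).

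Next, working in decreasing order $i = 2, 3, \ldots, k$, I would construct the in-paths $P_i$ iteratively. At stage $i$, let $D^{(i)}$ be $D$ with all previously chosen branch-vertices deleted (keeping $u$, $x$, $y$). I would locate in $D^{(i)}$ a strongly connected subtournament $H$ containing $u$ with $|V(H)| \geq \ell_i + 1$; Lemma~\ref{Lem:D-cycle} applied to $H$ yields a directed cycle of length $\ell_i + 1$ through $u$, and deleting its unique arc emanating from $u$ produces the desired $P_i$. When the removal of earlier branches destroys strong connectivity of the natural candidate for $H$, the fallback is to invoke Lemma~\ref{Lem:D-cycle+}(c) to obtain a Hamilton path in $D^{(i)}$ and then to use Lemma~\ref{Lem:D-path} to absorb orphaned vertices into a shorter in-path until its length reaches $\ell_i$.

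The principal obstacle is preserving the hypotheses of Lemmas~\ref{Lem:D-cycle} and~\ref{Lem:D-cycle+} across the $k-1$ successive deletions: the strong connectivity of the currently surviving piece is fragile, and the cycle-length lemma needs at least four vertices inside it. The quadratic slack $2k^2 + 2k + 6$ appears to be calibrated precisely to absorb the $O(k)$ vertices that may be lost at each of the $k-1$ stages to broken partite-mate pairs, tiny orphan components of $D^{(i)}$, and unusable chord arcs; without this buffer the iterative construction could exhaust the available structure before all $k - 1$ branches are produced.
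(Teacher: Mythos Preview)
Your iterative scheme has a genuine gap at the heart of step~3. You propose, at stage $i$, to ``locate in $D^{(i)}$ a strongly connected subtournament $H$ containing $u$ with $|V(H)|\ge \ell_i+1$'' and then pull a cycle of the right length through $u$. But nothing forces such an $H$ to exist after the first deletion. Concretely, once you remove the $\ell_2$ internal vertices of $P_2$, you may have destroyed almost all of $N^-(u)$: the in-path $P_2$ is a path \emph{into} $u$, so its last vertex is an in-neighbour of $u$, and the vertices feeding it may well be the bulk of the in-side of $u$. If $N^-_{D^{(i)}}(u)$ is tiny, no long cycle through $u$ survives, and no in-path to $u$ of length $\ell_3$ can be built at all. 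Your fallback does not repair this: Lemma~\ref{Lem:D-cycle+}(c) produces a Hamilton path of $D^{(i)}$ with no control over its terminal vertex, and Lemma~\ref{Lem:D-path} only grows a path at the front while preserving a \emph{fixed} terminus~$v_t$ --- it cannot redirect an arbitrary Hamilton path to end at $u$. The acknowledgment that the argument is ``fragile'' is accurate; as written it does not close.

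The paper's proof avoids the iteration entirely and is structurally quite different. It fixes a vertex $v$ of \emph{minimum} out-degree in the sink component, sets $A=N^+(v)$ and $B=N^-(v)$, and finds all $k-1$ subpaths \emph{simultaneously} inside $A$ by decomposing a single Hamilton path or Hamilton cycle of (a large piece of) $D_0[A]$. Each subpath is then routed to $v$ via a single edge into $B$; the point is that $|B|$ is never depleted because only $k-1$ edges into $B$ are used in total. The quadratic buffer $2k^2+2k+6$ is not there to absorb iterative losses as you guessed, but is consumed by a counting argument (the ``model'' argument on the Hamilton cycle of the large strong component $Q_j\subseteq A$) that shows few endpoints on the cycle can have $d^+(\,\cdot\,,B)\le k-2$, so some cyclic shift of the decomposition gives all $k-1$ endpoints enough room in $B$ to reach $v$. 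That mechanism --- decompose once inside $N^+(v)$, match endpoints into $N^-(v)$ --- is the idea your proposal is missing.
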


\begin{proof}
    We use  $D_0,D_1\ldots,D_p$ to denote the strongly connected components  of $D$. Let $U=V(D)\setminus V(D_0)$. If $|U|\geq \sum_{i=2}^k{\ell_i}$, then $p\geq 1$ and by Lemma \ref{Lem:D-cycle+}(b), we can assume that $D_j\rightarrow D_i$ for $0\le i < j\le p$ and $D[U]$ is a tournament with $P$ being a directed Hamilton path in $D[U]$. Decompose $P$ into $k-1$ vertices disjoint directed paths $P_2,P_3,\ldots,P_k$ ending at $v_2,v_3,\ldots,v_k$ of length  $\ell_2-1,\ell_3-1, \ldots, \ell_k -1$, respectively.
    Take a vertex $v_0\in D_0$. Then $v_i\rightarrow v_0$ for all $2\leq i\leq k$.
    Since $|N^+(v_0)|>2$,  there are two distinct vertices $x,y\in D_0$ satisfying $v_0\rightarrow x$ and $v_0\rightarrow y$. Those paths and together with $x,y,v_0$ form a tree $T^*_{\ell_2, \ldots, \ell_k}$.
    The remaining case is  $|U|<\sum_{i=2}^k{\ell_i}$. Take $k-1$ integers $\ell^\prime_2, \ldots, \ell^\prime_k$  such that $0\leq \ell^\prime_i\le \ell_i$ for each $i\in[2,k]$ and $\sum_{i=2}^k{\ell^\prime_i}+|U|= \sum_{i=2}^k{\ell_i}$. Then $|V(D_0)|\geq\sum_{i=2}^k{\ell^\prime_i}+2k^2+2k+6$.
   \begin{claim}\label{claim:D-2}
        If $D_0$ contains a  subgraph $T_1$ isomorphic to   $T^*_{ \ell^{\prime}  _2, \ldots, \ell'_k}$, then $D$ contains  a   subgraph isomorphic to   $T^*_ {\ell_2, \ldots, \ell_k}$.
   \end{claim}
 \begin{proof}
  Denote by $u$ the root of $T_1$ and by $u_2,\ldots,u_k$  the $k$ leafs of $T_1$ with $d^-_{T_1}(u_i)=0$ for $2\leq i\leq k$.    Define $r_i = \ell_i - \ell'_i$ for $i \in [2, k]$.  Since $|U|=\sum_{i=2}^k r_i$ and $D[U]$ is a tournament (by Lemma \ref{Lem:D-cycle+}(b)), $U$ can be partitioned into $k-1$ parts such that the $i$-th part is a tournament of order $r_i$. Note that for every pair of vertices $u\in U$ and $v\in D_0$, we have $u\rightarrow v$. Hence $T_1$ can be extended into a tree $T^*_ {  \ell_2, \ldots, \ell_k}$ by vertices in $U$.
 \end{proof}

By Claim \ref{claim:D-2}, it remains to prove that $D_0$ contains a  $T^*_{ \ell^{\prime}_2, \ldots, \ell^{\prime}  _k}$ as a subgraph. Without loss of generality, assume that $\ell^{\prime}_i\geq 1$ for $i\in [2,t]$.
Let $v$ be a vertex with minimum out-degree in $D_0$. Let $A=N^+_{D_0}(v)$ and $B=N^-_{D_0}(v)$. Then $|A|\leq \frac{ |D_0|-1}{2}$. Since each vertex in $D_0$ is adjacent to at least $|D_0|-2$ vertices, we have
\begin{equation}\label{bound:B}
    |B|\geq |D_0|-2-|A|\geq |D_0|-2-\frac{ |D_0|-1}{2}\geq \frac{ |D_0|-3}{2}\geq k^2+k+1.
\end{equation}

If $|B|\geq \sum_{i=2}^k \ell^\prime_i$,  then let $P$ be a longest directed path in $D_0[B\cup\{v\}]$ with $v$ being the ending vertex. By Lemma~\ref{Lem:D-path}, $P$ is a directed Hamilton path in $D_0[B\cup\{v\}]$.  Decompose $P$ into disjoint directed paths $P_2,\ldots,P_k$ ending at $v_2,v_3,\ldots,v_k$ of lengths  $\ell^{\prime}  _2-1,\ell^{\prime}  _3-1, \ldots, \ell^{\prime}  _k -1$, respectively. Recall that $|N^+_D(v)|\geq 2$ and $N^+_D(v)=N^+_{D_0}(v)=A$. There are two distinct vertices $x,y \in A=N^+(v)$.  Note that $\{v_2,v_3,\ldots,v_k\}\subseteq N^-(v)$. We get a $T^*_{ \ell^{\prime}  _2, \ldots, \ell^{\prime}  _k}$ as $ (\cup_{i\in[2,k]}(P_i+v_iv))\cup \{uw_1, uw_2\}$.

Now we assume $|B|\leq \sum_{i=2}^k \ell^\prime_i-1$. This implies
\begin{equation}\label{bound:A}
    |A|\geq |D_0|-2-|B|\geq 2k^2+2k+4+\sum_{i=2}^k \ell^\prime_i-|B|\geq 2k^2+2k+5.
\end{equation}
\begin{claim}\label{cl:merge}
If $D_0[A]$ contains $k-1$ disjoint directed paths $P_2,P_3,\ldots,P_k$ ending at vertices $a_2,a_3,\ldots a_k$ respectively and satisfying the following conditions:  \\
$(i)$ $1\leq |P_i|\leq \max\{\ell_i^\prime-1,1\}$ and $d^+_B(a_i)\geq k-1$ for $2\leq i\leq k$;\\
$(ii)$ there is a set $W\subseteq A\setminus \cup_{i=1}^k V(P_i)$ with $w\rightarrow z$ for every $w\in W$ and every $z\in \cup_{i=1}^k V(P_i)$ or $W=\emptyset$ such that
$$
|W|+\sum_{i=2}^k|P_i|+|B|\geq \sum_{i=2}^k \max\{\ell^\prime_i,2\},
$$
then $D_0$ contains a $T^*_{ q_2, \ldots, q_k}$. Here $q_i=\max\{\ell^\prime_i,2\}$ for $2\leq i\leq k$.
\end{claim}
\begin{proof} Condition $(i)$ tells the existence of a $k-1$ matching $\{a_ib_i: 2\leq i\leq k\}$ for a set $\{b_i:2\leq i\leq k\}\subseteq B$. Let $u_i$ be the starting vertex of $P_i$ and let $L_i=u_iP_ia_ib_iv$ for $2\leq i\leq k$. Apparently $L_i$ is a directed path of length between $2$ and $\max\{\ell^\prime_i,2\}$. We assert that for each vertex $x\in (W\cup B)\setminus\{b_i: 2\leq i\leq k\}$ and each $L_i$, there is a vertex $y^x_i\in V(L_i)$ such that $x\rightarrow y^x_i$. This can be done by setting $y^x_i=v$ when $x\in B\setminus\{b_i: 2\leq i\leq k\}$ and setting $y^x_i=a_i$ when $x\in W$. Note that $|W|+\sum_{i=2}^k|P_i|+|B|\geq \sum_{i=2}^k \max\{\ell^\prime_i,2\}.$
By Lemma \ref{Lem:D-path}, we can extend each path $L_i$ first by vertices in $B\setminus\{b_i: 2\leq i\leq k\}$ and then by vertices in $W$ until each leg reach a length of $\max\{\ell^\prime_i,2\}$. Recall that $|A|\geq 2k^2+2k+4+\sum_{i=2}^k \ell^\prime_i-|B|$. There are at least $2k^2$ vertices in $A$ not added into legs. Choose two of them as out-neighbors of $v$. Then we find a $T^*_{ q_2, \ldots, q_k}$.
\end{proof}
To satisfy the condition of Claim \ref{cl:merge}, we need to analyse the structure of $D_0[A]$. If $D_0[A]$ is a tournament or $D_0[A]$ has at least two partite sets of size two, then set $A^\prime=A$. If there is exactly one partite set of size two contained in $D_0[A]$,  say $W=\{x,y\}$, then set $A^{\prime}=A-x$.
In both cases, we can assume that $Q_1,Q_2,\ldots,Q_t (t\geq 1)$ are strongly connected components of $D_0[A^\prime]$ with  $Q_j\rightarrow Q_i$ for $1\leq i<j\leq t$ (by Lemma \ref{Lem:D-cycle+}), $A^\prime\subset A$ and $|A^\prime|\geq |A|-1\geq 2k^2+2k+4$. Now we analyse $D_0[A^\prime]$ instead of $D_0[A]$. The following claim tells that either some $Q_j$ is large or there is an index $j$ such that both $\sum_{i\leq j} |Q_i|$ and $\sum_{i>j} |Q_i|$ are large.

\begin{claim}\label{Cl:two-cases}
   There exists some $j\in[1,t]$ such that one of the following holds\\
   $(i)$ $\sum_{i=1}^j |Q_i|\geq k-1$ and $\sum_{i=j+1}^t |Q_i|\geq k-1$;\\
   $(ii)$ $|Q_j|\geq |A^\prime|-2k+4$;
\end{claim}
\begin{proof}
   Recall that $|A^\prime|\geq 2k^2+2k+4$. Let $j$ be the smallest index in $[1,t]$ such that $\sum_{i=1}^j |Q_i|\geq k-1$. So $\sum_{i=1}^{j-1} |Q_i|\leq k-2$. If $(i)$ does not hold, then  $\sum_{i=j+1}^t |Q_i|\leq k-2$. Therefore
   \[|Q_j|\geq |A^\prime|-\sum_{i=j+1}^t |Q_i|-\sum_{i=1}^{j-1} |Q_i|\geq |A^\prime|-(k-2)-(k-2)=|A^\prime|-2k+4.\]
\end{proof}
\setcounter{case}{0}
Now we proceed the proof by analyzing the two cases in  Claim \ref{Cl:two-cases}.
\begin{case}
   There exists some $j\in[1,t]$ such that $\sum_{i=1}^j |Q_i|\geq k-1$ and $\sum_{i=j+1}^t |Q_i|\geq k-1$
\end{case}
In this case, apply Lemma \ref{Lem:D-cycle+}(c) to $D_0[\cup_{i=1}^j Q_i]$, we get a directed Hamilton path $P$ in $D_0[\cup_{i=1}^j Q_i]$.  Note that $N^+_{D_0}(z)\subseteq B\cup\left(\cup_{i=1}^j Q_i\right)=B\cup V(P)$ for each vertex $z\in V(P)$ and $v$ is a vertex in $D_0$ with minimum out-degree. We have
$$d^+_{D_0}(z,B)\geq d^+_{D_0}(z)-|P|\geq d^+_{D_0}(v)-|P|=|A|-\sum_{i=1}^j |Q_i|\geq \sum_{i=j+1}^t |Q_i|\geq k-1.$$
Set $W:=\cup_{i=j+1}^t Q_i.$
If $|P|\geq \sum_{i=2}^k \max\{\ell^\prime_i-1,1\}$, then find $k-1$ disjoint paths $P_2,P_2,\ldots,P_k$ in $P$ such that $|P_i|= \max\{\ell^\prime_i-1,1\}$ for $i\in [2,k]$. Then
$$|W|+\sum_{i=2}^k|P_i|+|B|>\sum_{i=2}^k|P_i|+k-1\geq \sum_{i=2}^k \max\{\ell^\prime_i,2\}.$$

If $|P|<\sum_{i=2}^k \max\{\ell^\prime_i-1,1\}$, then let $P_2,P_3,\ldots,P_k$ be a decomposition of $P$ into $k-1$ disjoint paths such that $1\leq |P_i|\leq \max\{\ell^\prime_i-1,1\}$ for $i\in [2,k]$ (this is possible since $|P|=\sum_{i=1}^j |Q_i|\geq k-1$).
By (\ref{bound:A}), we have
$$|W|+\sum_{i=2}^k|P_i|+|B|=|A^\prime|+|B|\geq \sum_{i=2}^k \ell^\prime_i+2k^2+2k+3>\sum_{i=2}^k \max\{\ell^\prime_i,2\}.$$
In both situations, we obtain a tree $T^*_{ \ell^\prime_2, \ldots, \ell^\prime_k}$ by Claim \ref{cl:merge}.

\begin{case}
There exists some $j\in[1,t]$ such that $|Q_j|\geq |A^\prime|-2k+4$.
\end{case}
In this case, by Lemma \ref{Lem:D-cycle}, $Q_j$ contains a directed Hamilton cycle $C=w_1w_2\cdots w_hw_1$.
Let $h_2,h_3,\ldots,h_k$ be positive integers such that $h_i\leq \max\{\ell^\prime_i-1,1\}$ for $2\leq i\leq k$, $\sum_{i=2}^k h_i\leq |C|=h$  and $\sum_{i=2}^k h_i$ is maximum.
We say a sequence of indices $(\alpha_2,\alpha_3,\ldots,\alpha_k)$ is a {\it model} on $C$ if $\alpha_{j+1}=\alpha_{j}+h_{j}$ for $2\leq j\leq k-1$. Here the addition is on $\mathbb{Z}_h$. Given a model, we can find vertex disjoint paths $P_2,P_3,\ldots,P_k$ such that $P_i$ is of length $h_i-1$ and ending at the vertex $w_{\alpha_i}$.
Note that $\sum_{i=2}^k|P_i|\geq \min\left\{h, \sum_{i=2}^k(\ell^\prime_i-1)\right\}$. We have
\begin{align*}
\sum_{i=2}^k|P_i|+|B|&\geq \min\left\{h, \sum_{i=2}^k(\ell^\prime_i-1)\right\}+|B|\\
&\geq \min\left\{|A^\prime|-2k+4,\sum_{i=2}^k(\ell^\prime_i-1)\right\}+|B|\\
&\geq \min\left\{|A|+|B|-2k+3,\sum_{i=2}^k(\ell^\prime_i-1)+|B|\right\}.
\end{align*}
By (\ref{bound:B}) and (\ref{bound:A}), there holds
$\sum_{i=2}^k|P_i|+|B|\geq \sum_{i=2}^k \max\{\ell^\prime_i,2\}$.
If there is a model $(\alpha_2,\alpha_3,\ldots,\alpha_k)$ such that $d^+_{D_0}(w_{\alpha_i},B)\geq k-1$ for $2\leq i\leq k$. We can obtain a $T^*_{ \ell^\prime_2, \ldots, \ell^\prime_k}$ by Claim \ref{cl:merge}.

Now assume that for every model $(\alpha_2,\alpha_3,\ldots,\alpha_k)$, there exists a vertex $w_{\alpha_i}$ such that $d^+_{D_0}(w_{\alpha_i},B)\leq k-2$.
Let $M_0=(\alpha_2,\alpha_3,\ldots,\alpha_k)$ be a model on $C$. Define $M_j=(\alpha_2+j,\alpha_3+j,\ldots,\alpha_k+j)$ for $1\leq j\leq h-1$. Here the addition is on $\mathbb{Z}_h$. Then $M_j$ is also a model on $C$. Let $\mathcal{M}=\{M_i:0\leq i\leq h-1\}$. Let
$$X=\{\alpha: d^+_{D_0}(w_\alpha, B)\leq k-2 \}.$$
Note that each $\alpha\in X$ lies in exactly $k-1$ models in $\mathcal{M}$, and for each model $M_i$, there is at least one entry belonging to $X$. Therefore
$$|X|\geq \frac{|\mathcal{M}|}{k-1}=\frac{h}{k-1}\geq \frac{|A^\prime|-2k+4}{k-1}.$$
Let $H$ be the induced subgraph of $D_0$ on $\{w_\alpha:\alpha\in X\}$.  Then there is a vertex $z$ in $H$ with $d_H^+(z)\leq \frac{|X|-1}{2}$. Then
$$
\begin{aligned}
|A|=d^+_{D_0}(v)\leq d^+_{D_0}(z)&\leq d^+_{H}(z)+|A\setminus X|+d^+_{D_0}(z,B)\\
&\leq \frac{|X|-1}{2}+|A|-|X|+k-2.
\end{aligned}
$$
Hence $|X|\leq 2k-5$. Recall that $|X|\geq \frac{|A^\prime|-2k+1}{k-1}\geq \frac{|A|-2k}{k-1}$. We get
$$|A|\leq 2k^2-5k+5,$$
which contradicts (\ref{bound:A}) that $|A|\geq 2k^2+2k+5$. The proof is complete.
\end{proof}

\section{Proofs of Theorem \ref{Thm:subdivision of tree} and \ref{Thm:k-spider}}\label{sec:4}
Recall that $g(S_k,C_3)$ is the maximum number $N$ such that there exists an edge-colored $K_N$ containing neither a rainbow $S_k$ nor a monochromatic $C_3$. We will use this notation to state the proofs of main theorems.
 \begin{prop}\label{prop:gr}
     $g(S_k,C_3)<(k+1)!$ for every integer $k\geq 1$.
 \end{prop}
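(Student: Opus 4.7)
The plan is to prove the bound by induction on $k$. For $k=1$, a rainbow $S_1$ is simply any edge, so an edge-colored $K_N$ without a rainbow $S_1$ must satisfy $N\leq 1<2!$, giving the base case trivially.

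For the inductive step, assume $g(S_{k-1},C_3)<k!$ for some $k\geq 2$, and let $G$ be an edge-colored $K_N$ with no monochromatic triangle and no rainbow $S_k$. Since $G$ contains no rainbow $k$-star, every vertex $v$ satisfies $d^c_G(v)\leq k-1$. I would fix an arbitrary vertex $v$ and apply the pigeonhole principle: among the $N-1$ edges at $v$, some color class $c$ is used on a set $W\subseteq N(v)$ with $|W|\geq \lceil(N-1)/(k-1)\rceil$.

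The crucial observation is that, because $G$ is mono-$C_3$-free, no edge inside $W$ can have color $c$ (such an edge would close a monochromatic triangle with $v$). Consequently, every vertex $u\in W$ uses at most $k-2$ colors on its edges inside $K_N[W]$, so $K_N[W]$ is a mono-$C_3$-free edge-colored complete graph with no rainbow $S_{k-1}$. By the inductive hypothesis, $|W|\leq g(S_{k-1},C_3)<k!$, hence $|W|\leq k!-1$.

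Combining the two inequalities gives $N-1\leq (k-1)(k!-1)$, that is, $N\leq (k-1)\cdot k!-k+2$. A quick arithmetic check shows $(k-1)\cdot k!-k+2<(k+1)\cdot k!=(k+1)!$ for every $k\geq 2$, which yields $g(S_k,C_3)<(k+1)!$ as desired. The argument is a clean double induction that simultaneously uses the forbidden monochromatic triangle (to strip color $c$ from $K_N[W]$) and the forbidden rainbow star (to bound color degrees). I do not foresee a genuine obstacle; the only point requiring a little care is making sure the color degree in the induced subgraph $K_N[W]$ drops strictly, which is exactly what the mono-$C_3$-free hypothesis provides.
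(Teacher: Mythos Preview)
Your proof is correct and follows essentially the same route as the paper: induct on $k$, use pigeonhole at a vertex $v$ of color degree at most $k-1$ to find a large monochromatic neighborhood $W$, use the mono-$C_3$-free hypothesis to strip the color $c=col(v,W)$ from $G[W]$, and apply the inductive hypothesis to $G[W]$. The only cosmetic difference is that the paper phrases the induction constructively (finding a rainbow $S_{k-1}$ in $G[W]$ and extending it by $uv$), whereas you phrase it contrapositively (bounding $|W|$ and hence $N$); your version is arguably a bit cleaner since it skips the paper's redundant preliminary step of first locating a rainbow $(k-1)$-star.
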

 \begin{proof}
 We prove by induction on $k$. When $k=1$, it is trivial. Assume that the statement holds for $g(S_{k-1},C_3)$. Let $G$ be a mono-$C_3$-free edge-colored $K_n$ with $n\ge (k+1)!$.
 By the inductive hypothesis, $G$ contains a rainbow $(k-1)$-star with $v$ being the vertex of degree $k-1$.  If    $d^c_G(v)\geq k$, then the proof is complete.  If $d^c_G(v)\leq k-1$,   then  there exists a  vertex set $W$ of size at least $\frac{(k+1)!-1}{k-1}\geq k!$.  Since $|W|\geq k!$, by the inductive hypothesis and the assumption that $G$ is mono-$C_3$-free, $G[W]$ contains a rainbow $(k-1)$-star $T$ with $u$ being the center vertex of degree $k-1$. Note that $G$ has no monochromatic triangle, we know that $col(v,W) \cap col(T)=\emptyset$. So $T+uv$ is a rainbow $k$-star.
 \end{proof}

  \begin{lemma} \label{lem:all PC tree}
      Given a positive integer $k\geq 1$, let $G$ be a mono-$C_3$-free edge-colored $K_n$ with $n\ge (k+2)! $, then $G$ contains a PC copy of every tree of $k$ edges.
  \end{lemma}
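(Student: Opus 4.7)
I would prove this by induction on $k$. The base case $k=1$ is trivial since any edge of $G$ is a PC copy of $K_2$. For the inductive step, let $T$ be a $k$-edge tree with $k\geq 2$ and assume the lemma holds for $(k-1)$-edge trees, with threshold $(k+1)!$. Choose a leaf $v$ of $T$ with neighbor $u$; since $T$ has at least three vertices, $u$ has degree at least two in $T$, so it has degree at least one in $T':=T-v$. Because $n\geq(k+2)!\geq(k+1)!$, the inductive hypothesis produces a PC embedding $\phi$ of $T'$ in $G$; set $x=\phi(u)$, $U=\phi(V(T'))$, and let $C$ be the set of colors appearing at $x$ in $\phi(T')$, so $1\leq|C|=d_{T'}(u)\leq k-1$.

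If some $z\in V(G)\setminus U$ satisfies $col(xz)\notin C$, then mapping $v\mapsto z$ extends $\phi$ to a PC copy of $T$ and we are done. Otherwise every vertex of $V(G)\setminus U$ is joined to $x$ by an edge of color in $C$, so the pigeonhole principle produces a color $c\in C$ and a set $W\subseteq V(G)\setminus U$ with $|W|\geq(n-k)/(k-1)$ and $col(xz)=c$ for every $z\in W$. The inequality $(k+2)!-k\geq(k-1)(k+1)!$, which simplifies to $3(k+1)!\geq k$, guarantees $|W|\geq(k+1)!$.

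The crucial observation is that $G[W]$ contains no edge of color $c$: such an edge together with $x$ would form a monochromatic $c$-triangle, contradicting the hypothesis on $G$. Hence $G[W]$ is a mono-$C_3$-free complete graph of order at least $(k+1)!$, and by the inductive hypothesis it contains a PC copy $\phi''$ of $T'$. Writing $y=\phi''(u)$ and extending $\phi''$ by $\psi(v):=x$, I obtain a map $\psi\colon V(T)\to V(G)$. At every vertex other than $y$ the PC condition is inherited from $\phi''$ (recall that $v$ is a leaf). At $y$, the incident edges in $\psi(T)$ are $yx$ of color $c$ together with the edges of $\phi''(T')$ at $y$; the latter lie inside $W$ and hence all avoid color $c$, while being pairwise distinct since $\phi''$ is PC. Therefore $\psi$ is a PC copy of $T$.

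The main obstacle is the second case, where the naive extension of $\phi$ fails. The key insight is that this failure is self-healing: it forces a large monochromatic neighborhood $W$ of $x$, the mono-$C_3$-free hypothesis then forbids color $c$ inside $W$, and so after re-embedding $T'$ inside $W$ the original vertex $x$ itself automatically becomes a safe image for the new leaf $v$, with the forbidden color $c$ playing the role of the new leaf-edge color.
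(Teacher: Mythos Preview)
Your proof is correct, and it takes a genuinely different route from the paper's argument. The paper first invokes Proposition~\ref{prop:gr} (the bound $g(S_k,C_3)<(k+1)!$) to show that the set $V_0=\{v:d^c_G(v)\le k-1\}$ has fewer than $(k+1)!$ vertices; it then applies the inductive hypothesis inside $V_1=V(G)\setminus V_0$, so that the image of the leaf's neighbor automatically has color degree at least $k$ and the extension step cannot fail. Your argument, by contrast, bypasses Proposition~\ref{prop:gr} entirely: you embed $T'$ anywhere, and if the naive extension fails you exploit the failure itself to produce a large monochromatic neighborhood $W$ of $x$; the mono-$C_3$-free hypothesis kills the color $c$ inside $W$, so after re-embedding $T'$ there the original vertex $x$ becomes a guaranteed-safe image for the leaf, with $c$ now serving as the leaf-edge color. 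The paper's approach is a clean one-shot embedding that reuses a result already needed elsewhere in the paper, while yours is a self-contained two-shot argument with a pleasant ``failure is self-healing'' flavor; both give the same $(k+2)!$ bound.
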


  \begin{proof}
By induction on $k$. The case $k=1$ can be verified immediately.  Assuming that the conclusion holds for every tree of $k-1$ edges. Let $T$ be a tree of $k$ edges with a leaf vertex $x$ and a pendent edge $xy$.
Define $V_0=\{v\in V(G): d^c(v)\leq k-1\}$ and $V_1=V(G)\setminus V_0$. Since $G[V_0]$ contains no rainbow $S_k$, by Proposition \ref{prop:gr}, we have  $|V_0|<(k+1)!$.  Therefore $|V_1|>(k+2)!-(k+1)! >(k+1)!.$ By the inductive hypothesis,  $G[V_1]$ contains a subgraph $T^{\prime}$ which is a PC copy of $T-x$. Let $\varphi:\ V(T)\setminus \{x\} \rightarrow V( T^{\prime})$ be an isomorphism mapping from $T-x$ to $T'$. Let $y^\prime=\varphi(y)$. By the definition of $V_1$, we have $d^c_G(y^\prime)\geq k$. Note that $|T^\prime -y|\leq k-1$. There exists a vertex $x^{\prime}  \in V(G)\setminus V(T^{\prime}  )$ such that $col(y^\prime x^{\prime}  )\notin col(y^\prime,V(T'))$. Hence $T^{\prime}  +y^{\prime}  x^{\prime} $   is a  PC copy of $T$ in $G$. The proof is complete.
 \end{proof}

\begin{proof}[\textbf{Proof of Theorem \ref{Thm:subdivision of tree}}]
By Lemma \ref{lem:all PC tree}, $G$ contains a PC copy of $T_0$. Denote by $T_1$ a maximal  PC subgraph of $G$ such that $T_1$ is a subdivision of $T_0$. If $V(T_1)=V(G)$, then we are  done. Otherwise, for each edge $xy\in E(T_1)$, denote by $T_1(x,y)$ and  $T_1(y,x)$  the two connected components of $T_1-xy$ containing $x$ and $y$, respectively. For a vertex $u \in V(G) \backslash V(T_1)$, if  $col(ux)=col(xy)$, then we say the triple $(u,x,y)$ is {\it compatible} to $T_1$. By the assumption that $T_1$ is maximal, for every leaf vertex $x$ and the pendent edge $xy$ in $T_1$, the triple $(u,x,y)$ is compatible. Now let $(u,x,y)$ be a triple compatible to $T_1$ with $|T_1(y,x)|$ being the minimum (here $x$ may not be a leaf vertex). Since $G$ is mono-$C_3$-free and $col(ux)=col(xy)$, we have $col(uy)\neq col(ux)$.
If $col(uy)\not\in col(y,N_{T_1(y,x)}(y))$, then $T_1-xy+xu+uy$ forms a PC subdivision of  $T_0$ larger than $T_1$,
a contradiction. Hence there exists a vertex $z\in N_{T_1(y,x)}(y)$ with $col(uy)=col(yz)$. This implies that  $(u,y,z)$ is compatible to $T_1$ with $|T_1(z,y)|<|T_1(y,x)|$, a contradiction.
\end{proof}

\setcounter{case}{0}
\begin{proof}[\textbf{Proof of Theorem \ref{Thm:k-spider}}]
The proof is given by distinguishing the number of vertex-disjoint nice bowties in $G$.
\begin{case}
    There are at least $g(S_k,C_3)+1$ vertex-disjoint nice bowties in $G$.
\end{case}
Let $B_1,\cdots,B_s$ be vertex-disjoint nice bowties in $G$ with $s\geq g(S_k,C_3)+1$ such that $G-\cup^h_{i=1} {V(B_i)}$  contains no nice bowtie.   Pick one center $v_i$ from each bowtie $B_i$ ($1\leq i\leq s$) to form a set  $W$.  Since $G$ is mono-$C_3$-free and $|W|\geq g(S_k,C_3)+1$, there exists a rainbow $k$-star $S_k$ in $G[W]$. Without loss of generality, assume $V(S_k)=\{v_0, v_1,v_2, \ldots, v_k\}$ and $v_0$ is the $k$-degree vertex in $S_k$.

For $1\leq i\leq k$, since $v_i$ is a center vertex of the bowtie $B_i$, there exists a subgraph $X_i\subseteq B_i$ such that $X_i+v_iv_0$ is either a nice 2-shovel or a nice 3-shovel. Denote by $Y_i$  the nice shovel $X_i+v_iv_0$. Then $Y_1,Y_2,\ldots,Y_k$ are disjoint shovels overlapping at the vertex $v_0$. We will obtain a desired PC spider by extending or shrinking these shovels.

Define $I=\{1,2,\ldots,k\}$ and $I^-=\{i\in I:  \ell_i \leq |Y_i|-1\}$.
For each $i\in I^-$,  we can modify $Y_i$ into a leg of length $\ell_i$ by removing  $|Y_i|-1-\ell_i $ vertices.  Let $R$  be the removed vertices. Then $U=(V(G)\setminus \cup^k_{i=1} V(Y_i))\cup R$ is the set of vertices waiting to be embedded. Note that $|U|=\sum_{i\in I\setminus I^-} {(\ell_i-|Y_i|+1)}$. We partition $U$ into $|I\setminus I^-|$ disjoint sets $\{U_i\}_{i\in I\setminus I^-}$  such that $|U_i|=\ell_i-|Y_i|+1$ for each $i\in I\setminus I^-$.
Then by Lemma \ref{Lem:SPANING}, each $G[U_i]$ of order at least $3$ contains a spanning nice shovel $F_i$. Set $F_i=U_i$ when $|U_i|\leq 2$. Apply Lemma \ref{Lem:S AND Y} to the triangle in $Y_i$ and $F_i$ for all $i\in I\setminus I^-$, we get a desired PC spanning tree in $G$.

\begin{case}
   The number of vertex-disjoint nice bowties in $G$ is at most $g(S_k,C_3)$.
\end{case}
In this case we will first find a structure that is almost the spider we want except the first leg, which is a nice 1-shovel. Then we get the final spider by extending this shovel into a leg of length $\ell_1$.
To state the proof, we need more definitions.
Let $S$ be a spider with precisely $k-1$ legs of lengths $\ell_2,\ell_3,\ldots,\ell_k$ respectively, let $Y$ be a triangle and let $O^*_{ \ell_2, \ldots, \ell_k}$ be the graph obtained by identifying a vertex in $Y$ with the center of $S$. We call $O^*_{ \ell_2, \ldots, \ell_k}$ an {\it octopus}. The legs of $O^*_{ \ell_2, \ldots, \ell_k}$ are exactly the legs of $S$. We say $O^*_{ \ell_2, \ldots, \ell_k}$ is {\it nice} if $S$ is a PC spider and $Y\cup P$ is a nice shovel for each leg $P$ of $S$. If $G$ contains a {nice} octopus $O^*_{ \ell_2, \ldots, \ell_k}$, then let $F=G-O^*_{ \ell_2, \ldots, \ell_k}$.
By Lemma \ref{Lem:SPANING},  $F$ contains a spanning nice shovel $F^\prime$ when $|F|\geq 3$. Set $F^\prime=F$ when $|F|\leq 2$. Apply Lemma \ref{Lem:S AND Y} to $F^\prime$ and the triangle in $O^*_{ \ell_2, \ldots, \ell_k}$, we get a desired PC spider. Therefore the following claim holds.
\begin{claimA}\label{Cl:octopus}
	If $G$ contains a nice octopus $O^*_{ \ell_2, \ldots, \ell_k}$, then $G$ contains a PC spider with legs of lengths $\ell_1,\ell_2, \ldots, \ell_k$.
\end{claimA}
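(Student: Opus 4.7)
The plan is to absorb the vertices outside the octopus, together with the two non-center vertices of the octopus' triangle, into a single PC leg of length $\ell_1$ attached to the octopus center. Let $F = G - V(O^*_{\ell_2,\ldots,\ell_k})$. A quick count (the octopus uses $1 + \sum_{i=2}^k \ell_i$ vertices for its spider part plus $2$ extra vertices for its triangle) gives $|F| = n - 3 - \sum_{i=2}^k \ell_i = \ell_1 - 2$, exactly what is needed so that $|V(Y) \cup V(F)| = \ell_1 + 1$.

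First I would put $F$ into the form accepted by the second slot of Lemma \ref{Lem:S AND Y}: if $|F| \geq 3$, invoke Lemma \ref{Lem:SPANING} on $G[V(F)]$ to obtain a spanning nice shovel $F'$ of $F$; otherwise take $F' = F$ as a vertex set of size at most $2$. Next, let $u_1$ denote the octopus center (the vertex where the triangle $Y$ is glued to the spider part). By the niceness of $O^*$, for every leg of the spider part, the pair (triangle, leg) is a nice shovel, which forces the unique edge $u_2u_3$ of $Y$ not incident to $u_1$ to be a center edge of $Y$. Therefore Lemma \ref{Lem:S AND Y} applies to $Y$ and $F'$, yielding a PC path $P^* = z_1 z_2 \cdots z_{\ell_1+1}$ with $V(P^*) = V(Y) \cup V(F')$, starting at $z_1 = u_1$, and with $col(z_1 z_2) \in col(u_1, \{u_2, u_3\})$. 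Removing $u_1$ from $P^*$ gives a candidate first leg of length $\ell_1$ rooted at $u_1$.

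Finally I would verify that grafting $P^*$ onto the spider part of $O^*$ produces a PC spider with leg lengths $\ell_1, \ell_2, \ldots, \ell_k$. The existing legs are unchanged and remain PC, and $P^*$ itself is PC, so the only issue is proper coloring at $u_1$. Here the niceness of $O^*$ does precisely the required work: for each existing leg with first edge $u_1 w_2$, the definition of a nice shovel gives $col(u_1 w_2) \notin col(u_1, \{u_2, u_3\})$, while $col(z_1 z_2) \in col(u_1, \{u_2, u_3\})$ by construction. Hence the new leg's first-edge color differs from the first-edge color of every existing leg, and since the spider part was already PC the $k$ edges incident to $u_1$ carry pairwise distinct colors.

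I do not anticipate a genuine obstacle; the definitions of ``nice shovel'' and ``nice octopus'' have been calibrated so that the color set used to enter the triangle and the color set used to exit it are disjoint. The only delicate point is the final coloring check at $u_1$, and it reduces to reading off this designed disjointness.
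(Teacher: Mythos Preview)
Your proposal is correct and follows exactly the approach the paper takes (the paper gives its argument immediately before stating the claim): set $F=G-O^*$, extract a spanning nice shovel $F'$ of $F$ via Lemma~\ref{Lem:SPANING} (or take $F'=F$ when $|F|\le 2$), and apply Lemma~\ref{Lem:S AND Y} to the octopus' triangle together with $F'$. You have simply made explicit the vertex count $|F|=\ell_1-2$ and the coloring check at the center $u_1$, which the paper leaves implicit; one small phrasing quibble is that ``removing $u_1$ from $P^*$'' is unnecessary, since $P^*$ itself is already the first leg of length $\ell_1$ issuing from $u_1$.
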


Let $B_1,B_2,\ldots,B_s$ be vertex-disjoint nice bowties in $G$ such that there is no nice bowtie in $H:=G\setminus \cup_{i=1}^s V(B_i)$. Then $s\leq g(S_k,C_3)$. Recall that each bowtie has at most $6$ vertices. We have $|H|\geq n-6g(S_k,C_3)$. Note that $\ell_1\geq \frac{n-1}{k}\geq 6g(S_k,C_3)+2k^2+2k+7$. Then
$$|H|=1+\sum_{i=2}^k\ell_i+\ell_1-6g(S_k,C_3)\geq \sum_{i=2}^k\ell_i+2k^2+2k+8.$$

For each vertex $v$ in $H$, we claim that at most one color in $col(v,N_{H}(v))$ appears more than once at $v$. Otherwise by the mono-$C_3$-free condition, $H$  contains a nice short bowtie with $v$ being the center vertex, which contradicts the choice of $H$.

\begin{claimA}\label{Cl:color-once}
For each vertex $v\in V(H)$, at most one color in $col(v,N_{H}(v))$ appears more than once at $v$.
\end{claimA}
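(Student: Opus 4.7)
The plan is to argue by contradiction: if two distinct colors each appeared more than once among the edges at some vertex $v$ of $H$, then we could exhibit a nice short bowtie inside $H$ with $v$ as its center, contradicting the defining property of $H$.

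Specifically, I would suppose toward a contradiction that there are two distinct colors $\alpha, \beta \in col(v, N_H(v))$ with $\alpha \neq \beta$ each appearing on at least two edges incident to $v$ inside $H$. Pick vertices $u_1, u_2 \in N_H(v)$ with $col(vu_1) = col(vu_2) = \alpha$ and $u_3, u_4 \in N_H(v)$ with $col(vu_3) = col(vu_4) = \beta$. Since $\alpha \neq \beta$, the sets $\{u_1, u_2\}$ and $\{u_3, u_4\}$ are disjoint, so $v, u_1, u_2, u_3, u_4$ are five distinct vertices of $H$.

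Next I would check that the resulting configuration is a nice short bowtie with center $v$. The triangle $vu_1u_2$ cannot be monochromatic since $G$ is mono-$C_3$-free, and because its two edges at $v$ both have color $\alpha$, we must have $col(u_1u_2) \neq \alpha$; thus $u_1u_2$ is a center edge of the triangle $vu_1u_2$. The symmetric argument shows $u_3u_4$ is a center edge of the triangle $vu_3u_4$. Finally, $col(v, \{u_1,u_2\}) = \{\alpha\}$ and $col(v, \{u_3,u_4\}) = \{\beta\}$ are disjoint, which is precisely the last condition in the definition of a nice short bowtie with center $v$. Hence $H[\{v,u_1,u_2,u_3,u_4\}]$ contains a nice short bowtie, contradicting the choice of $H$ as a complete graph with no nice bowtie.

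No real obstacle is expected; the only thing to be careful about is to keep $\{u_1,u_2\}$ and $\{u_3,u_4\}$ disjoint (automatic from $\alpha \neq \beta$), and to invoke the mono-$C_3$-free property to force $u_1u_2$ and $u_3u_4$ to have the correct colors so that they are genuine center edges. The argument is essentially just unpacking the definition of a nice short bowtie against the local color pattern at $v$.
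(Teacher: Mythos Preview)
Your proposal is correct and matches the paper's own argument essentially verbatim: assume two colors repeat at $v$, use the mono-$C_3$-free condition to force the two opposite edges to be center edges, and conclude that the resulting configuration is a nice short bowtie centered at $v$, contradicting the choice of $H$.
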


There could be a vertex $v$ with no color repeats at $v$. Let $V_1=\{v\in V(H): \Delta^{mon}_H(v)=1\}$ and $V_2=\{v\in V(H): \Delta^{mon}_H(v)\geq 2\}$.

If $|V_1|\geq 3$, then let $v_1,v_2,v_3$ be distinct vertices in $V_1$. By the definition of $V_1$, the cycle $v_1v_2v_3v_1$ must be a rainbow triangle. Let $H^\prime=H-\{v_1,v_2,v_3\}$. Recall that $|H|\geq \sum_{i=2}^k\ell_i+2k^2+2k+8$. We have $|H^\prime|\geq \sum_{i=2}^k\ell_i+2k^2+2k+5$. Let $X_2, X_3,\ldots, X_k$ be disjoint sets in $H^\prime$ such that $|X_i|=\ell_i$ for $2\leq i\leq k$. By Lemma \ref{Lem:STAR}, there is a PC path $P_i$ on $\{v_1\}\cup X_i$ with $v_1$ being the starting vertex for $2\leq i\leq k$. Since $\Delta^{mon}_{H^\prime}(v_1)=1$, the triangle $v_1v_2v_3v_1$ and paths $P_2,P_3,\ldots,P_k$ form a nice octopus $O^*_{ \ell_2, \ldots, \ell_k}$. By Claim \ref{Cl:octopus}, we are home.

If $2\leq \Delta^{mon}_{H}(x)\leq 2k^2+2k+7$ for some vertex $x\in V_2$, then assume $\alpha$ is the color such that $\Delta^{mon}_{H}(x) = |\{u\in V(H): col(ux) = \alpha\}|$.  Let $U_\alpha=\{u\in V(H): col(ux) = \alpha\}$ and $H^\prime=H-U_\alpha$. Then $|H^\prime|\geq \sum_{i=2}^k\ell_i+1$. Let $X_2, X_3,\ldots, X_k$ be disjoint sets in $H^\prime-x$ such that $|X_i|=\ell_i$ for $2\leq i\leq k$. By Lemma \ref{Lem:STAR}, there is a PC path $P_i$ on $\{x\}\cup X_i$ with $v_1$ being the starting vertices for $2\leq i\leq k$. Note that $\Delta^{mon}_{H^\prime}(x)=1$. Choose distinct vertices $a,b\in U_\alpha$, then the triangle $xabx$ and paths $P_2,P_3,\ldots,P_k$ form a nice octopus $O^*_{ \ell_2, \ldots, \ell_k}$. By Claim \ref{Cl:octopus}, we are home.

The remaining case is that $|V_1|\leq 2$ and $\Delta^{mon}_{H}(x)\geq 2k^2+2k+8$ for every $x\in V_2$. Let $H_1=H-V_1$.
Then $|H_1|\geq \sum_{i=2}^k\ell_i+2k^2+2k+6$ and $\Delta^{mon}_{H_1}(u)\geq 2k^2+2k+6$ for every vertex $u\in V(H_1)$. Let $f(u)$ be the unique color appearing more than once in $col(u,N_{H_1}(u))$. We claim that for any two vertices $u, v\in V(H_1)$,  there holds $col(u)=f(u)$ or $f(v)$. Otherwise, $H$ contains a nice long bowtie with $u$ and $v$ being the center vertices, which contradicts the choice of $H$.
\begin{claimA}\label{Cl:degenerate}
For each pair of vertices $u,v\in V(H)$, we have $col(uv)=f(u)$ or $f(v)$.
\end{claimA}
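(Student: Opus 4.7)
The plan is a short proof by contradiction: assume some $u, v \in V(H_1)$ satisfy $col(uv) \notin \{f(u), f(v)\}$, and then exhibit a nice long bowtie contained in $H = G - \bigcup_{i=1}^s V(B_i)$ with $u$ and $v$ as its two centers. This will contradict the fact that $H$ contains no nice bowtie, as chosen at the beginning of Case $2$. I do not anticipate a serious obstacle; once one commits to placing $u$ and $v$ at the centers, every remaining choice is forced by the definitions.

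For the first step I would invoke the hypothesis $\Delta^{mon}_{H_1}(u), \Delta^{mon}_{H_1}(v) \ge 2k^2+2k+6$, which is certainly at least $4$. This guarantees two $f(u)$-colored neighbors $x_1, x_2$ of $u$ inside $V(H_1) \setminus \{u, v\}$, and then, after discarding the already-used vertices, two $f(v)$-colored neighbors $y_1, y_2$ of $v$ inside $V(H_1) \setminus \{u, v, x_1, x_2\}$. Because $G$ is mono-$C_3$-free, $col(x_1x_2) \ne f(u)$ and $col(y_1y_2) \ne f(v)$, which makes $x_1x_2$ the unique center edge of the triangle $ux_1x_2$ and $y_1y_2$ the center edge of $vy_1y_2$.

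Finally I would verify that the graph $F$ on $\{x_1, x_2, u, v, y_1, y_2\}$ consisting of these two triangles together with the bridge $uv$ is a nice long bowtie, i.e.\ that both $F - \{x_1, x_2\}$ and $F - \{y_1, y_2\}$ are nice $2$-shovels. Each of these is a triangle with a single attached edge and with the center edge already identified, so niceness reduces to checking that $col(uv) \notin col(v, \{y_1, y_2\}) = \{f(v)\}$ and $col(uv) \notin col(u, \{x_1, x_2\}) = \{f(u)\}$ respectively. Both are immediate from the contradictory hypothesis, producing the desired nice long bowtie inside $H$ and completing the proof.
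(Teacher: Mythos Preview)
Your proposal is correct and matches the paper's approach exactly: the paper's entire proof is the single sentence ``Otherwise, $H$ contains a nice long bowtie with $u$ and $v$ being the center vertices, which contradicts the choice of $H$,'' and you have simply unpacked that sentence, supplying the details of how to pick $x_1,x_2,y_1,y_2$ (using the lower bound $\Delta^{mon}_{H_1}(\cdot)\ge 2k^2+2k+6$ established just before the claim) and verifying the nice-shovel conditions via the mono-$C_3$-free hypothesis and the assumption $col(uv)\notin\{f(u),f(v)\}$.
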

Now construct an auxiliary digraph $D$ satisfying $V(D)=V(H_1)$ and $A(D)=\{uv: col(uv)=f(u) \ \text{and}\  col(uv)\ne f(v) \}.$ Since $H_1$ is mono-$C_3$-free, according to Claim \ref{Cl:degenerate} and Definition \ref{def:monoC3-free-tournament}, the directed graph $D$ is a mono-$C_3$-free tournament with $N^+_{D}(u)\geq 2k^2+2k+6>2$ for every vertex $u\in V(H_1)$. Recall that $|H_1|\geq \sum_{i=2}^k\ell_i+2k^2+2k+6$.
By Theorem \ref{Thm:D-spider}, $D$ contains a subgraph $ T^*_{\ell_2, \cdots, \ell_k}$, which is a nice octopus $O^*_{ \ell_2,\ldots,\ell_k}$ in $G$. Again we obtain the desired PC spanning tree by Claim \ref{Cl:octopus}. The proof is complete.
\end{proof}

\section{Conclusion}\label{sec:conclusion}
Tree embedding has become an increasingly popular research area in recent years. It has been extensively studied in various contexts, such as spanning trees or almost spanning trees in random graphs \cite{montgomery_spanning_2019}, random directed graphs \cite{montgomery_spanning_nodate}, $(n,d,\lambda)$ graphs \cite{han_spanning_2023,hyde_spanning_nodate}, tournaments \cite{benford_trees_2022,benford_trees_2022-1}, dense directed graphs \cite{kathapurkar_spanning_2022}, edge-colored graphs \cite{glock_decompositions_2021} and finite vector spaces\cite{chakraborti_almost_2024}.
This paper demonstrates that in edge-colored complete graphs, the absence of monochromatic triangle implies the existence of PC copies of every spanning tree, which is a subdivision of a $k$-star. The most challenging part in the proof is the case of multipartite tournaments. In fact, every $n$-vertex rooted tree oriented from each child-vertex to its father-vertex, is contained in a transitive tournament $D$ on $n$ vertices. Color each arc in $D$ with the label of its tail, resulting an edge-colored complete graph, which is mono-$C_3$-free and contains PC copies of every spanning tree. Motivated by this example, we ask the following question.
\begin{question}
Let $T$ be an $n$-vetrex tree with maximum degree at most $\Delta$ and let $G$ be a mono-$C_3$-free edge-colored $K_n$. Does $G$ always contain a PC copy of $T$ when $n$ is sufficiently large?
\end{question}
A graph $G$ is called $\mathcal{T}(n,\Delta)$-universal if $G$ contains every spanning tree with maximum degree at most $\Delta$. The statement of above question is also inspired by a question from Alon, Krivelevich and Sudakov\cite{alon_embedding_2007} on the $\mathcal{T}(n,\Delta)$-universal property of $(n,d,\lambda)$-graphs. We also observed that the result of Benford and Montgomery\cite{benford_trees_2022-1} may help dealing with the multipartite tournament case in further research on Conjecture \ref{conj:spanning tree}. It would be interesting to see more connections between Conjecture \ref{conj:spanning tree} and tree embedding results in graphs and digraphs.

~\\
~\\
{\bf Acknowledgements.} The authors would like to thank Xujiao Liu and Fangfang Wu for early discussions on this topic. Thanks also go to Fengming Dong and Donglei Yang for helpful comments. Part of this work was initiated soon after the first author began her visit to the Institute for Basic Science in Korea. She greatly appreciates the hospitality and the pleasant atmosphere in Extremal Combinatorics and Probability Group that contributed to the successful completion of this work.
\bibliographystyle{abbrv}
\bibliography{name}

\begin{thebibliography}{10}

\bibitem{aharoni_rainbow_2019}
R.~Aharoni, M.~DeVos, and R.~Holzman.
\newblock Rainbow triangles and the {Caccetta}-{H{\"a}ggkvist} conjecture.
\newblock {\em Journal of Graph Theory}, 92(4):347--360, 2019.

\bibitem{alon_embedding_2007}
N.~Alon, M.~Krivelevich, and B.~Sudakov.
\newblock Embedding nearly-spanning bounded degree trees.
\newblock {\em Combinatorica}, 27(6):629--644, 2007.

\bibitem{alon_random_2017}
N.~Alon, A.~Pokrovskiy, and B.~Sudakov.
\newblock Random subgraphs of properly edge-coloured complete graphs and long rainbow cycles.
\newblock {\em Israel Journal of Mathematics}, 222(1):317--331, 2017.

\bibitem{andersen_hamilton_1989}
L.~D. Andersen.
\newblock Hamilton circuits with many colours in properly edge-coloured
  complete graphs.
\newblock {\em Mathematica Scandinavica}, 64(1):5--14, 1989.

\bibitem{barr_properly_1998}
O.~Barr.
\newblock Properly coloured {Hamiltonian} paths in edge-coloured complete
  graphs without monochromatic triangles.
\newblock {\em Ars Combinatoria}, 50:316--318, 1998.

\bibitem{benford_trees_2022-1}
A.~Benford and R.~Montgomery.
\newblock Trees with few leaves in tournaments.
\newblock {\em Journal of Combinatorial Theory, Series B}, 155:141--170, 2022.

\bibitem{benford_trees_2022}
A.~Benford and R.~Montgomery.
\newblock Trees with many leaves in tournaments, 2022.
\newblock arXiv:2207.06384 [math].

\bibitem{bollobas_alternating_1976}
B.~Bollob\'{a}s and P.~Erd\H{o}s.
\newblock Alternating {Hamiltonian} cycles.
\newblock {\em Israel Journal of Mathematics}, 23(2):126--131, 1976.

\bibitem{bondy2008graph}
J.~A. Bondy and U.~S.~R. Murty.
\newblock {\em Graph theory}.
\newblock Springer Publishing Company, Incorporated, 2008.

\bibitem{chakraborti_almost_2024}
D.~Chakraborti and B.~Lund.
\newblock Almost spanning distance trees in subsets of finite vector spaces,
  2024.
\newblock arXiv:2306.12023 [math].

\bibitem{ding_properly_2022}
L.~Ding, J.~Hu, G.~Wang, and D.~Yang.
\newblock Properly colored short cycles in edge-colored graphs.
\newblock {\em European Journal of Combinatorics}, 100:103436, Feb. 2022.

\bibitem{erdos_combinatorial_1950}
P.~Erd\H{o}s and R.~Rado.
\newblock A combinatorial theorem.
\newblock {\em Journal of the London Mathematical Society}, 25:249--255, 1950.

\bibitem{fujita_decomposing_2019}
S.~Fujita, R.~Li, and G.~Wang.
\newblock Decomposing edge-coloured graphs under colour degree constraints.
\newblock {\em Combinatorics, Probability and Computing}, 28(5):755--767, 2019.

\bibitem{gallai_transitiv_1967}
T.~Gallai.
\newblock Transitiv orientierbare {Graphen}.
\newblock {\em Acta Mathematica. Academiae Scientiarum Hungaricae}, 18:25--66,
  1967.

\bibitem{glock_decompositions_2021}
S.~Glock, D.~K{\"u}hn, R.~Montgomery, and D.~Osthus.
\newblock Decompositions into isomorphic rainbow spanning trees.
\newblock {\em Journal of Combinatorial Theory, Series B}, 146:439--484, 2021.

\bibitem{gyarfas_ramsey_1987}
A.~Gy\'{a}rf\'{a}s, J.~Lehel, R.~H. Schelp, and Z.~Tuza.
\newblock Ramsey numbers for local colorings.
\newblock {\em Graphs and Combinatorics}, 3(1):267--277, 1987.

\bibitem{gyarfas_edge_2004}
A.~Gy\'{a}rf\'{a}s and G.~Simonyi.
\newblock Edge colorings of complete graphs without tricolored triangles.
\newblock {\em Journal of Graph Theory}, 46(3):211--216, 2004.

\bibitem{han_spanning_2023}
J.~Han and D.~Yang.
\newblock Spanning trees in sparse expanders, 2023.
\newblock arXiv:2211.04758 [math].

\bibitem{hyde_spanning_nodate}
J.~Hyde, N.~Morrison, A.~Muyesser, and M.~Pavez-Signe.
\newblock Spanning trees in pseudorandom graphs via sorting networks, 2023.
\newblock arXiv:2311.03185 [math].

\bibitem{jamison_constrained_2003}
R.~E. Jamison, T.~Jiang, and A.~C.~H. Ling.
\newblock Constrained {Ramsey} numbers of graphs.
\newblock {\em Journal of Graph Theory}, 42(1):1--16, 2003.

\bibitem{kathapurkar_spanning_2022}
A.~Kathapurkar and R.~Montgomery.
\newblock Spanning trees in dense directed graphs.
\newblock {\em Journal of Combinatorial Theory, Series B}, 156:223--249, 2022.

\bibitem{li_properly_2021}
R.~Li.
\newblock Properly colored cycles in edge-colored complete graphs without
  monochromatic triangle: {A} vertex-pancyclic analogous result.
\newblock {\em Discrete Mathematics}, 344(11):112573, 2021.

\bibitem{li_vertexdisjoint_2020}
R.~Li, H.~Broersma, and S.~Zhang.
\newblock Vertex-disjoint properly edge‐colored cycles in edge‐colored
  complete graphs.
\newblock {\em Journal of Graph Theory}, 94(3):476--493, 2020.

\bibitem{li_classification_2020}
R.~Li, B.~Li, and S.~Zhang.
\newblock A classification of edge-colored graphs based on properly colored
  walks.
\newblock {\em Discrete Applied Mathematics}, 283:590--595, 2020.

\bibitem{lo_properly_2016}
A.~Lo.
\newblock Properly coloured {Hamiltonian} cycles in edge-coloured complete
  graphs.
\newblock {\em Combinatorica}, 36(4):471--492, 2016.

\bibitem{montgomery_spanning_2019}
R.~Montgomery.
\newblock Spanning trees in random graphs.
\newblock {\em Advances in Mathematics}, 356:106793, 2019.

\bibitem{montgomery_spanning_nodate}
R.~Montgomery.
\newblock Spanning cycles in random directed graphs.
\newblock 2022.
\newblock arXiv:2103.06751 [math].

\end{thebibliography}
\end{document}